\newif\ifarxiv
\newtheorem{theorem}{Theorem}
\newtheorem{corollary}{Corollary}
\newtheorem{lemma}{Lemma}
\newtheorem{proposition}[lemma]{Proposition}
\newtheorem{conjecture}{Conjecture}
\theoremstyle{definition}
\newtheorem{definition}[lemma]{Definition}
\newtheorem{example}[lemma]{Example}
\newtheorem{remark}[lemma]{Remark}
\newtheorem{notation}[lemma]{Notation}
\makeatletter\patchcmd{\ttlh@hang}{\parindent\z@}{\parindent\z@\leavevmode}{}{}\patchcmd{\ttlh@hang}{\noindent}{}{}{}\makeatother 
\titlespacing*{\section}{0pt}{0mm}{0mm}
\titlespacing*{\subsection}{0pt}{0mm}{0mm}
\newcommand{\myspace}{\setlength{\abovedisplayskip}{1mm}\setlength{\belowdisplayskip}{0mm}}
\newenvironment{Mlist}{\begin{itemize}[topsep=0pt,itemsep=0pt,leftmargin=7mm]}{\end{itemize}}
\newenvironment{Menum}[2]{\begin{enumerate}[topsep=0pt,itemsep=0pt,leftmargin=9mm,label=#1,ref=#2]}{\end{enumerate}}
\newenvironment{claims}{\begin{Menum}{\textnormal{\textbf{(\alph*)}}}{(\alph*)}}{\end{Menum}}
\newenvironment{Mlist}{\begin{itemize}}{\end{itemize}}
\newenvironment{Menum}[2]{\begin{enumerate}[label=#1,ref=#2]}{\end{enumerate}}
\newenvironment{claims}{\begin{Menum}{\textnormal{(\alph*)}}{(\alph*)}}{\end{Menum}}
\title{Shapes of surfaces that contain a great and a small circle through each point
\thanks{\funding{This work was funded by the Austrian Science Fund (FWF) projects P33003 and P36689.}}}
\author{Niels Lubbes
\thanks{Institute for Algebra, Johannes Kepler University, Linz, Austria
\email{info@nielslubbes.com}}}
\def\centerarc[#1](#2)(#3:#4:#5:#6){\draw[#1] ($(#2)+({#5*cos(#3)},{#5*sin(#3)})$) arc (#3:#4:#5 and #6);}
\definecolor{colG}{RGB}{8,144,8}
\newcommand{\fig}[3]{\includegraphics[height=#1cm, width=#2cm]{img/#3}}
\newcommand{\csep}[1]{\setlength{\tabcolsep}{#1}}
\newcommand{\ASN}[1]{Assertion~\ref{#1}}
\newcommand{\AXM}[1]{Axiom~\ref{#1}}
\newcommand{\END}{\hfill $\vartriangleleft$}
\newcommand{\df}[1]{{\it #1}}
\newcommand{\Wlog}{without loss of generality }
\newcommand{\resp}{respectively}
\newcommand{\st}{such that }
\newcommand{\wrt}{with respect to }
\renewcommand{\c}{\colon}
\newcommand{\set}[2]{\{#1:#2\}}
\newcommand{\dto}{\dasharrow}
\newcommand{\bas}[1]{\langle #1\rangle}
\newcommand{\aut}{\operatorname{Aut}}
\newcommand{\Sing}{\operatorname{Sing}}
\renewcommand{\P}{{\mathbb{P}}}
\renewcommand{\S}{{\mathbb{S}}}
\newcommand{\C}{{\mathbb{C}}}
\newcommand{\R}{{\mathbb{R}}}
\newcommand{\Z}{{\mathbb{Z}}}
\newcommand{\E}{{\mathbb{E}}}
\newcommand{\cH}{{\mathcal{H}}}
\newcommand{\cO}{{\mathcal{O}}}
\newcommand{\cZ}{{\mathcal{Z}}}
\newcommand{\cS}{{\mathcal{S}}}
\newcommand{\X}{{\mathcal{X}}}
\renewcommand{\k}{k}
\newcommand{\h}{h}
\renewcommand{\l}{\ell}
\newcommand{\id}{\operatorname{id}}
\newcommand{\oL}{\overline{L}}
\newcommand{\oR}{\overline{R}}
\newcommand{\bR}{\mathbf{R}}
\newcommand{\hstar}{\hat{\star}}
\newcommand{\lt}{\operatorname{LT}}
\newcommand{\rt}{\operatorname{RT}}
\newcommand{\cL}{{\mathcal{L}}}
\newcommand{\cR}{{\mathcal{R}}}
\begin{document}

\ifarxiv
\myspace
\begin{center}
{\LARGE
Shapes of surfaces that contain a great and a small circle through each point
}
\\[3mm]
{\large
Niels Lubbes
}
\\[3mm]
{\today}
\end{center}

\begin{abstract}
We classify the topological types of surfaces in the 3-dimensional unit sphere that contain both a great and a small circle through each point. In particular, these surfaces are homeomorphic to one of five normal forms and are either the pointwise product of circles in the unit quaternions or contain five concurrent circles. We classify the real singular loci of such surfaces and characterize how circles in the surface meet the self-intersection locus.
\\[2mm]
{\bf Keywords:}
real surfaces, topology, pencils of circles, singular locus, M\"obius geometry, elliptic geometry, Clifford translations, unit quaternions
\\[2mm]
{\bf MSC2010:}
51B10, 
51M15, 
14J17, 
14P25 
\end{abstract}
\else
\maketitle
\begin{abstract}
We classify the topological types of surfaces in the 3-dimensional unit sphere that contain both a great and a small circle through each point. In particular, these surfaces are homeomorphic to one of five normal forms and are either the pointwise product of circles in the unit quaternions or contain five concurrent circles. We classify the real singular loci of such surfaces and characterize how circles in the surface meet the self-intersection locus.
\end{abstract}
\begin{keywords}
real surfaces, topology, isotopy, pencils of circles, singular locus, M\"obius geometry, elliptic geometry, Clifford translations, unit quaternions
\end{keywords}
\begin{MSCcodes}
51B10, 51M15, 14J17, 14P25
\end{MSCcodes}
\fi

\section{Introduction}
\label{sec:intro}

Can we recover the shape of an embedded surface from the knowledge
that this surface contains two curves of predefined type through each point?

For example, a surface in $\R^3$ containing $\lambda\geq 3$ lines through each point
must be a plane.
A surface containing $\lambda=2$ lines through each point
must be a doubly ruled quadric and thus shaped like a horse saddle or a cooling tower.
Such hyperboloid structures are of interest to architects \cite{2007arch}.
A surface in~$\R^3$ containing a line and circle through each
point is also of degree at most two~\cite{2013} and therefore either a plane, cone, cylinder or one-sheeted hyperboloid.

Now let us consider the analogue of lines and circles in elliptic geometry,
namely great and small circles in the \df{unit sphere}~$S^3\subset\R^4$.
A circle in~$S^3$ is called \df{great} if centered at the origin and \df{small} otherwise.

If a surface in $S^3$ contains
$\lambda\geq 3$ great circles through each point, then it must be a sphere,
and if this surface contains $\lambda=2$ great circles through each point,
then it is homeomorphic to a torus by \citep[Corollary~3]{2024lt}.
See \cite{1985}, in case the great circles belong to some Hopf fibration.

In this article, we consider the shape of surfaces that
contain both a great circle and a small circle through each point.
In \Cref{fig:shape}, we see possible shapes of
stereographic projections of such surfaces.
\begin{figure}[!ht]
\centering
\csep{4mm}
\begin{tabular}{ccc}
\fig{3}{3.5}{shape-I}  &
\fig{3}{3.5}{shape-II}      &
\fig{3}{3.5}{shape-III-loop}
\\
Shape~\ref{I} & Shape~\ref{II} & Shape~\ref{III}
\end{tabular}
\caption{Stereographic projections of surfaces in $S^3$
that contain a great (red) and a small (blue) circle through each point.}
\label{fig:shape}
\end{figure}

The classification of smooth abstract rational surfaces up to homeomorphism
has been completed by Comessatti \citep[1914]{1914}\citep[3.6.1]{2000k}.
The classification of topological types of algebraic surfaces in real projective 3-space
is open and part of \df{Hilbert's 16th problem}. For smooth surfaces of degree at most
four this classification was completed by Kharlamov using homological methods
(see \citep[3.5.4]{2000k} for an overview). In this article, we do not assume smoothness
and consider surfaces of degrees up to eight.

A surface $Z\subset S^3$ is \df{$\lambda$-circled} if $Z$
contains at least $\lambda\in \Z_{\geq 0}\cup\{\infty\}$ circles through a general point.
If $\lambda\in\Z_{\geq0}$, then we assume that $Z$ is not $(\lambda+1)$-circled.
We call $Z$ \df{celestial} if it is $\lambda$-circled \st $\lambda\geq2$.
We call $Z$ \df{great} if it contains a great circle through a general point.

To explore potential applications of our results, let us consider the following problem
from geometric modeling which orginates from
the automotive, airplane and ship industry:
\textit{Given a set of algebraic surfaces $\cS$, classify smooth $C^1$-surfaces~$X$ \st
$X\subset S_1\cup\cdots\cup S_n$ for some $S_1,\ldots,S_n\in \cS$.}

There is an extensive literature for the case that
$\cS$ consists of celestial surfaces of degree four whose curvature lines are circles.
See for example \cite{1988,1990,1995,2002,2011kb,2012dup}
where $X\cap S_i$ is called a ``cyclidic patch'' of the ``Dupin cyclide''~$S_i$
and the great circles in $S_i$ are called ``Villarceau circles''.
In this case, the topological surface~$X$ may contain up to four circular arcs through almost each point.
This connects to architecture since $X$ admits embedded graphs whose edges are circular arcs
and such arcs are suitable for structural beams and panel boundaries of buildings~\cite{2011}.

With such architectural applications in mind, \cite{2012web} allows $\cS$ to contain all quartic celestial surfaces
so that $X$ can have up to six circular arcs through each point.
The topological classification in \cite{2019} of these surfaces
sheds additional light on the possible shapes of~$X$.

In this article, we propose for $\cS$ to be the set of all great celestial surfaces.
We classify such surfaces up to homeomorphisms
and show that their singular loci are contained in some great circle.
Their classification up to diffeomorphisms of $S^3$ remains an open problem,
but we state a precise conjecture.
Up to M\"obius equivalence, such $\cS$ is a superset of the smooth quartic celestial surfaces \cite{2024lt}
and thus $X$ admits a wide range of shapes.
However, if $\deg S_i>4$,
then $X\cap S_i$ contains only two circular arcs through each point~\cite{2021circle}.

We remark that aside architecture, celestial surfaces have also potential applications
in kinematics~\cite{2018kin,2020kin} and computer vision~\cite{2023vision,2024lc}.

Before we state our main result \Cref{thm:shape}, let us first introduce some terminology and state
its three \Cref{cor:top,cor:prod,cor:AB}.

From now on surfaces can be defined as zero sets of polynomials with real coefficients
and therefore with \df{surface} is meant a real irreducible algebraic surface.

Unless explicitly indicated otherwise, a \df{cycle}, \df{sphere}, \df{torus}, \df{disk}
and \df{solid torus}
is a topological space that is homeomorphic to $S^1$, $S^2$, $S^1\times S^1$,
$D^2:=\set{z\in\C}{|z|\leq 1}$ and $S^1\times D^2$, \resp.
Notice that a \df{circle} is a non-empty irreducible conic in~$S^3$ and thus not invariant under homeomorphisms.

A cycle $C\subset M$ in a topological space~$M$ is called \df{trivial} in~$M$ if it bounds
an embedded disk~$D\subset M$.
We remark that any two non-trivial cycles in a torus~$T$ are related by some homeomorphism~$T\to T$
(see \citep[Theorem~C13]{2003}).

By \citep[Proposition~4.1]{2000},
a smooth quartic celestial surface in~$S^3$ is homeomorphic to
either a torus, a sphere or the disjoint union of two spheres
(see also \citep[Figure~12]{2012web}).
The following corollary classifies, up to homeomorphism, great celestial surfaces in~$S^3$ of any degree
and without assuming smoothness.

\begin{corollary}
\label{cor:top}
If $Z\subset S^3$ is a great celestial surface, then
$Z$ is homeomorphic to one of the following five normal forms (see \Cref{tab:tnf}):
\begin{Mlist}
\item the sphere~$S^2$,

\item the torus~$S^1\times S^1$,

\item
the union $S\cup S'$ of spheres~$S$ and~$S'$ \st $|S\cap S'|=2$,

\item
a union $T\cup T'$ of tori $T$ and $T'$
\st $T\cap T'$ is a cycle that is non-trivial in both $T$ and $T'$, or

\item
the disjoint union of a torus and a cycle.
\end{Mlist}
\end{corollary}
\begin{table}[!ht]
\caption{Topological normal forms.}
\label{tab:tnf}
\centering
\csep{2mm}
\begin{tabular}{ccccc}
\begin{tikzpicture}[scale=0.8] 
\draw[thick] (0,0) arc (0:360:1 and 1);
\draw[densely dotted] (0,0) arc (0:180:1 and 0.3);
\draw[] (0,0) arc (0:-180:1 and 0.3);
\end{tikzpicture}
&
\begin{tikzpicture}[xscale=0.5,yscale=0.8] 
\centerarc[thick](0,0)(0:360:2:1)
\centerarc[thick](0,0)(-190:10:1:0.4)
\centerarc[thick](0,-0.4)(160:20:0.8:0.4)
\end{tikzpicture}
&
\begin{tikzpicture}[yscale=0.5,xscale=0.9, rotate=90] 
\draw[thick] (0,0) arc (270:-90:1.9 and 1);
\draw[thick] (0,0) arc (270:-90:0.5 and 1);
\draw[densely dotted] (-0.5,1) arc (0: 180:0.7 and 0.3);
\draw[]        (-0.5,1) arc (0:-180:0.7 and 0.3);
\draw[densely dotted] (1.9,1) arc (0: 180:0.7 and 0.3);
\draw[]        (1.9,1) arc (0:-180:0.7 and 0.3);
\draw[fill=red,red] (0,2) circle [radius=0.08];
\draw[fill=red,red] (0,0) circle [radius=0.08];
\end{tikzpicture}
&
\begin{tikzpicture}[xscale=0.3,yscale=0.3] 
\draw (-4,0) to [out=90,in=90] (-3,0) to [out=90,in=90] (-2,0);
\draw[dotted] (-4,0) to [out=270,in=270] (-3,0) to [out=270,in=270] (-2,0);
\draw[thick] (-2,0) to [out=90, in=180] (0,1);
\draw[thick,red] (-3,0) to [out=90, in=180] (0,2);
\draw[thick] (-4,0) to [out=90, in=180] (0,3);
\draw[thick] (-2,0) to [out=270, in=180] (0,-1) to [out=0, in=190] (0.95,-0.9);
\draw[dotted] (0.95,-0.9) to [out=10, in=270] (2,0);
\draw[thick] (-4,0) to [out=270, in=180] (0,-3);
\draw[thick] (0,1) to [out=0, in=90] (2,0);
\draw[thick] (0,2) to [out=0, in=90] (3,0);
\draw[dotted] (0,-2) to [out=0,in=190] (1.6,-1.8);
\draw[thick] (1.65,-1.8) to [out=15,in=270] (3,0);
\draw[dotted] (0,2) to [out=0,in=100] (0.9,0.9);
\draw[thick] (0.9,0.9) to [out=280,in=90] (1,0) to [out=270,in=0] (0,-2);
\draw[thick] (0,3) to [out=0,in=100] (1.8,1.65);
\draw[dotted] (1.8,1.65) to [out=280,in=90] (2,0);
\draw[thick] (2,0) to [out=270,in=0] (0,-3);
\draw[thick,red] (-3,0) to [out=270, in=180] (0,-2);
\draw[thick,red,densely dotted] (0,2) to [out=0,in=90] (2,0) to [out=270, in=0] (0,-2);
\end{tikzpicture}
&
\begin{tikzpicture}[xscale=0.5,yscale=0.8]
\centerarc[thick](0,0)(0:360:2:1)
\centerarc[thick](0,-0.4)(160:20:0.8:0.4)
\centerarc[thick,red](-1.5,0)(-10:290:1.6:0.8)
\centerarc[very thick,red,dotted](-1.5,0)(-10:-65:1.6:0.8)
\centerarc[thick](0,0)(-190:10:1:0.4)
\end{tikzpicture}
\end{tabular}
\end{table}

We identify $S^3$ with the unit-quaternions
and let $\star$ denote the Hamiltonian product.
We define $A\star B$ for curves~$A, B\subset S^3$ as the Zariski closure of
\[
\set{a\star b \in S^3}{ a\in A \text{ and } b\in B}.
\]
We call $Z\subset S^3$ a \df{Clifford torus} if there exist
great circles $A,B\subset S^3$ \st $Z=A\star B$.
Clifford tori are of degree four and were pioneered by
William Kingdon Clifford (1845--1879) (see \citep[VII]{1998}).

If $Z\subset S^3$ is a celestial surface \st $\deg Z\neq 4$, then
Skopenkov and Krasauskas show in \citep[Main~Theorem~1.1]{2019}
that there exists circles~$A$ and~$B$ \st $Z$ is M\"obius equivalent to either $A\star B$,
or the Zariski closure of
\[
\mu^{-1}(\set{a+b}{a\in A,b\in B}).
\]
The method in \cite{2019} concerns the factorization of certain
bivariate polynomials with quaternionic coefficients.
\Cref{cor:prod} confirms and strengthens~\cite{2019}
under the additional assumption that $Z$ is great.
The alternative proof in this article is complementary in that it exposes the
self-intersections of such surfaces and does not depend on
the algebra of quaternionic polynomials.

\begin{corollary}
\label{cor:prod}
If a surface $Z\subset S^3$ contains a great and small circle through a general point,
then either
$Z=A\star B$ for some circles $A,B\subset S^3$ or $Z$ contains five concurrent circles.
\end{corollary}

Corollaries~\ref{cor:top} and \ref{cor:prod} consider properties of great celestial
surfaces in~$S^3$ that are invariant under homeomorphisms
and isometries of $S^3$, \resp.
The shapes in \Cref{def:shape} below are invariant under isometries of~$S^3$ as well
and thus are invariants in elliptic geometry (see \Cref{sec:E}).

A circle $C\subset Z$ in a surface $Z\subset S^3$ is
a \df{double circle} if $p_\infty\in C$ implies that $\deg\mu(Z)=\deg Z-2$.

\begin{definition}
\label{def:shape}
Suppose that a surface $Z\subset S^3$ satisfies the following conditions:
\begin{Menum}{C\arabic*.}{C\arabic*}
\item\label{C1} $Z$ is of degree $8$ and 2-circled,
\item\label{C2} $Z$ a disjoint union of great circles,
\item\label{C3} $Z$ a union of small circles, and
\item\label{C4} the singular locus of $Z$ consists of a great double circle.
\end{Menum}
We say that $Z$ has \df{Shape I}, \df{Shape II} or \df{Shape III} if
in addition to Conditions~\ref{C1}--\ref{C4} the following item~I, II and~III hold, \resp:
\begin{Menum}{\Roman*.}{\Roman*}
\item\label{I}
There exists tori $T$ and $T'$ in~$S^3$
\st $Z=T\cup T'$ and $T\cap T'$ is the great double circle in~$Z$.
If $C\subset Z$ is a great circle, then either $C\subset T$ or $C\subset T'$.
If $C\subset Z$ is a small circle, then $|C\cap T\cap T'|=2$.

\item\label{II}
The surface $Z$ is a torus and
a small circle in~$Z$ meets the great double circle tangentially at one point.

\item\label{III}
There exists a torus $T$ and a great circle $C\subset S^3$
\st $Z=T\,\cup\, C$ and $T\,\cap\, C=\varnothing$.
\END
\end{Menum}
\end{definition}

See \Cref{fig:shape} for examples of each of the three shapes.
\Cref{fig:Iab} depicts two stereographic projections of a surface that has Shape~\ref{I},
\st exterior and interior of the orange topological torus component are interchanged.
\begin{figure}[!ht]
\centering
\csep{10mm}
\begin{tabular}{cc}
\fig{3}{4}{shape-Ia} &
\fig{3}{4}{shape-Ib}
\end{tabular}
\caption{Two stereographic projections of a surface in~$S^3$ that has Shape~\ref{I}.}
\label{fig:Iab}
\end{figure}

A smooth 4-circled, 5-circled or 6-circled surface in~$S^3$
is called a \df{ring cyclide}, \df{Perseus cyclide} and \df{Blum cyclide}, \resp.
By \citep[Proposition~39]{2024lt}, the Clifford tori are exactly the great ring cyclides.

\begin{corollary}
\label{cor:AB}
If $A,B\subset S^3$ are different circles \st either $A$ or $B$ is great,
then $A\star B$ is either a great ring cyclide or has Shape~\ref{I}, Shape~\ref{II} or Shape~\ref{III}.
\end{corollary}

We call surface $Z\subset S^3$ a \df{CO cyclide} or \df{EO cyclide}
if $Z$ stereographically projects to a circular cone and elliptic cone, \resp.
The CO~cyclides are also known as \df{spindle cyclides}.

\begin{theorem}
\label{thm:shape}
Suppose that $Z\subset S^3$ is a great celestial surface.
\begin{claims}
\item\label{thm:shape:a}
$\deg Z\in\{2,4,8\}$.
\item\label{thm:shape:b}
If $\deg Z=4$, then $Z$ is either a
Blum cyclide, Perseus cyclide, ring cyclide, EO~cyclide, or CO cyclide.
Moreover, $Z=A\star B$ for circles $A, B\subset S^3$ if and only if $Z$ is a ring cyclide.
\item\label{thm:shape:c}
If $\deg Z=8$, then
$Z$ has either Shape~\ref{I}, Shape~\ref{II} or Shape~\ref{III}.
Moreover, there exists
a great circle $A\subset S^3$ and small circle $B\subset S^3$ \st $Z\in \{ A\star B,B\star A\}$.
\end{claims}
\end{theorem}

\begin{example}
\label{exm:shape}
Let the great circle $A_0\subset S^3$
and the small circles $B_1, B_2, B_3\subset S^3$
be parametrized as follows:
\begin{Mlist}

\item[$A_0$] $:= \set{(\cos\alpha,\sin\alpha,0,0)}{0\leq\alpha<2\pi}$,

\item[$B_1$] $:=\left\{
\left(
\frac{12+8\cos\beta}{17+12\cos\beta},~
\frac{8\sin\beta}{17+12\cos\beta},~
0,~
\frac{9+12\cos\beta}{17+12\cos\beta}
\right)
~:~
0\leq \beta<2\pi
\right\}$,

\item[$B_2$] $:=\left\{
\left(
\frac{2+\cos\beta}{3+2\cos\beta},~
\frac{\sin\beta}{3+2\cos\beta},~
0,~
\frac{2+2\cos\beta}{3+2\cos\beta}
\right)
~:~
0\leq \beta<2\pi
\right\}$, and

\item[$B_3$] $:=\left\{
\left(
\frac{6+2\cos\beta}{11+6\cos\beta},~
\frac{2\sin\beta}{11+6\cos\beta},~
0,~
\frac{9+6\cos\beta}{11+6\cos\beta}
\right)
~:~
0\leq \beta<2\pi
\right\}$.

\end{Mlist}
The surfaces
$A_0\star B_1$,
$A_0\star B_2$ and
$A_0\star B_3$
have Shape~\ref{I}, Shape~\ref{II} and Shape~\ref{III},
\resp~(see \citep[\texttt{orbital}]{2017} for a
\href{https://github.com/niels-lubbes/orbital#example-3-computing-products-of-circles}{verification}).
The stereographic projections of these surfaces
are depicted in \Cref{fig:shape}.
\END
\end{example}

\begin{remark}
\Cref{thm:shape}\ref{thm:shape:c} is the main contribution of the current article.
Theorems~\ref{thm:shape}\ref{thm:shape:a} and \ref{thm:shape}\ref{thm:shape:b}
follow from
\citep[Theorem~1]{2021circle} and
\citep[Theorem~1(c) and Corollary~4]{2024lt}, \resp.
We summarized \Cref{thm:shape} and its corollaries in \Cref{tab:shape}.
\END
\end{remark}

\begin{table}[!ht]
\centering
\caption{
By \Cref{thm:shape} and Corollaries~\ref{cor:top}, \ref{cor:prod} and \ref{cor:AB}
a great $\lambda$-circled celestial surface of degree~$d$ in~$S^3$ is
for some circles $A,B\subset S^3$ characterized by a row.}
\label{tab:shape}
\csep{6mm}
\begin{tabular}{llllc}
$d$ & $\lambda$ & name or shape   & topological type & $A\star B$ \\\hline
$2$ & $\infty$  & 2-sphere        & sphere           & no         \\\hdashline
$4$ & $6$       & Blum cyclide    & torus            & no         \\
$4$ & $5$       & Perseus cyclide & torus            & no         \\
$4$ & $4$       & ring cyclide    & torus            & yes        \\
$4$ & $3$       & EO cyclide      & two spheres      & no         \\
$4$ & $2$       & CO cyclide      & two spheres      & no         \\\hdashline
$8$ & $2$       & Shape \ref{I}   & two tori         & yes        \\
$8$ & $2$       & Shape \ref{II}  & torus            & yes        \\
$8$ & $2$       & Shape \ref{III} & torus and cycle  & yes        \\
\end{tabular}
\end{table}

\Cref{thm:shape}\ref{thm:shape:c} suggests the following conjecture:

\begin{conjecture}
\label{cnj}
If $Z\subset S^3$ is a great celestial surface \st $\deg Z\neq\{2,4\}$,
then there exists a diffeomorphism $f\c S^3\to S^3$ \st $f(Z)$
is equal to either
$A_0\star B_1$,
$A_0\star B_2$ or
$A_0\star B_3$.
\end{conjecture}


{\bf Overview.}
In \Cref{sec:E}, we setup a projective model for elliptic geometry.
In \Cref{sec:delta}, we introduce an invariant
for curve components in the singular loci of projective surfaces.
In \Cref{sec:div}, we use divisor classes to characterize the incidences between
circles and complex double lines in great celestial surfaces $Z\subset S^3$
of degree eight.
In \Cref{sec:central}, we use this invariant in combination with the central projection
to obtain a characterization of the incidences between
circles and double curves in~$Z$.
This characterization is used in
\Cref{sec:shape} to prove the main result~\Cref{thm:shape}.
\ifarxiv
\makeatletter
\renewcommand{\@cftmaketoctitle}{} 
\makeatother
\begingroup
\vspace{1mm} 
\def\addvspace#1{\vspace{-1.5mm}} 
\tableofcontents
\endgroup
\fi

\section{Projective model for elliptic geometry}
\label{sec:E}

In order to prove \Cref{thm:shape},
we investigate curves at complex infinity.
To uncover these hidden curves we
define a \df{real variety} $X$ to be a complex irreducible variety together with
an antiholomorphic involution $\sigma_X\c X\to X$
called the \df{real structure} of $X$ (see \citep[Section~I.1]{1989}).
We denote its real points by
\[
X_\R:=\set{p\in X}{\sigma_X(p)=p}.
\]
Such varieties can always be defined by polynomials with real coefficients \citep[Section~6.1]{1991}.

In what follows, points, curves, surfaces and projective spaces $\P^n$ are real algebraic varieties
and maps between such varieties
are compatible with their real structures
unless explicitly stated otherwise.
Conics are real and reduced by default, but may be reducible.
By default, we assume that the real structure $\sigma_{\P^n}\c\P^n\to\P^n$ sends $x$
to $(\overline{x_0}:\ldots:\overline{x_n})$, where $\overline{\,\cdot\,}$ denotes the complex conjugate.

As circles play a central role, it is natural to consider
the \df{M\"obius quadric} for our space:
\[
\S^3:=\set{x\in\P^4}{-x_0^2+x_1^2+x_2^2+x_3^2+x_4^2=0}.
\]
The \df{elliptic absolute} is
defined as the following hyperplane section of~$\S^3$ without real points:
\[
\E:=\set{x\in \S^3}{ x_0=0}.
\]
Let $\_\hstar\_\c\S^3\times\S^3\dto \S^3$ be the rational map defined by
\begin{align*}
(x,y)\mapsto (x_0y_0:& ~x_1y_1-x_2y_2-x_3y_3-x_4y_4:x_1y_2+x_2y_1+x_3y_4-x_4y_3:\\
                     & ~x_1y_3-x_2y_4+x_3y_1+x_4y_2:x_1y_4+x_2y_3-x_3y_2+x_4y_1).
\end{align*}
We consider the following complex transformations of~$\S^3$,
where $\aut_\C\P^4$ denotes the complex projective transformations of~$\P^4$:
\begin{align*}
\aut_\C\S^3:=&\set{\varphi\in\aut_\C\P^4}{\varphi(\S^3)=\S^3},\\
\aut_\E\S^3:=&\set{\varphi\in\aut_\C\S^3}{\varphi(\E)=\E},\\
\lt\S^3:=&\set{\varphi\c\S^3\dto\S^3}{\varphi(x)=p\,\hstar\,x,~ p\in\S^3\setminus\E}, \text{ and}\\
\rt\S^3:=&\set{\varphi\c\S^3\dto\S^3}{\varphi(x)=x\,\hstar\,p,~ p\in\S^3\setminus\E}.
\end{align*}
The \df{M\"obius transformations} are defined as
\[
\aut\S^3:=\set{\varphi\in\aut_\C\S^3}{\varphi\circ\sigma_{\P^4}=\sigma_{\P^4}\circ\varphi}.
\]
The
\df{elliptic transformations},
\df{left Clifford translations} and
\df{right Clifford translations} are defined
as
\[
\aut_\E\S^3\cap\aut\S^3\quad
\lt\S^3\cap\aut\S^3\quad\text{and}\quad
\rt\S^3\cap\aut\S^3,
\quad\text{\resp.}
\]
The \df{left generator} and \df{right generator}
that pass through $p\in\E$ are defined as
\[
\cL_p:=\set{q\,\hstar\,p}{q\in\S^3\setminus\E}
\quad\text{and}\quad
\cR_p:=\set{p\,\hstar\,q}{q\in\S^3\setminus\E},
\quad\text{\resp.}
\]

Let $\bR\c \S^n_\R\to S^n$ denote the isomorphism that sends $x$ to
$\left({x_1}/{x_0},\ldots,{x_{4}}/{x_0}\right)$.
If $V\subset \S^3$ is a variety, then we define
\[
V(\R):=\bR(V_\R).
\]
Notice that $\S^3(\R)=S^3$.

The elliptic transformations correspond to rotations and reflections of~$S^3$
and the left/right Clifford translations correspond to isoclinic rotations of~$S^3$.
The following proposition is classical (see \citep[\textsection7.9 and 7.93]{1998}).
Recall that $\_\star\_\c S^3\times S^3\to S^3$ denotes the Hamiltonian product, where
we identified $S^3$ with the unit quaternions.

\begin{proposition}
\label{prp:E}~
\begin{claims}
\item\label{prp:E:a}
$\bR(x\,\hstar\,y)=\bR(x)\star\bR(y)$ for all $x,y\in\S^3_\R$.
\item\label{prp:E:b}
$\lt\S^3,\rt\S^3\subset\aut_\E\S^3$.
\item\label{prp:E:c}
For all $p\in\E$,
the generators $\cL_p$ and $\cR_p$ are the two complex lines in $\E$ containing~$p$.
\item\label{prp:E:d}
For all $\varphi\in\lt\S^3$, we have $\varphi(\cL_p)=\cL_p$.
\\For all $\varphi\in\rt\S^3$, we have $\varphi(\cR_p)=\cR_p$.
\end{claims}
\end{proposition}

\begin{proof}
See \citep[Proposition~4]{2024lt}.
\end{proof}

Notice that the complex conjugate of a left (right) generator is again a left (right) generator.

\begin{definition}
\label{def:model}
We call a conic $C\subset \S^3$ a \df{great circle} or \df{small circle},
if $C(\R)$ is as such in~$S^3$.
Similarly, a surface $X\subset\S^3$ is called
\df{$\lambda$-circled},
\df{great} or \df{celestial}
if $X(\R)\subset S^3$ is defined as such in \Cref{sec:intro}.
\END
\end{definition}

\section{Sectional delta invariant}
\label{sec:delta}

In this section, we introduce an invariant for 1-dimensional
components in the singular locus of a surface.
This invariant measures how singular such a component is.

\begin{definition}
\label{def:di}
The \df{delta invariant} of a point $p$ in the curve $C\subset\P^n$
with structure sheaf~$\cO$
is defined as $\delta_p(C):=\operatorname{length}(\widetilde{\cO_p}/\cO_p)$,
where $\widetilde{\cO_p}$ denotes the
integral closure of the stalk~$\cO_p$ (see \citep[Exercise IV.1.8]{1977} or
\citep[\href{https://stacks.math.columbia.edu/tag/0C3Q}{Tag 0C3Q}]{2018stacks}).
\END
\end{definition}

\begin{remark}
Notice that the delta invariant of a singular point in a curve is a non-zero positive integer.
Informally, we may think of~$\delta_p(C)$
as the number of double points that are concentrated at $p$ (see \citep[page 85]{1968}).
\END
\end{remark}

We denote the \df{singular locus} of a complex surface~$X\subset\P^n$
by~$\Sing X$.
If $C\subset X$ is a complex curve, then
$p_a(C)$ denotes its \df{arithmetic genus}
and
$p_g(C)$ denotes its \df{geometric genus}.

\begin{lemma}
\label{lem:genus}
Suppose that $X\subset\P^n$ is a complex surface and
$H\subset X$ a general hyperplane section.
\begin{claims}
\item\label{lem:genus:a}
The complex curve $H$ is irreducible and reduced, and $\Sing H=H\cap\Sing X$.

\item\label{lem:genus:b}
$p_a(H)-p_g(H)=\sum_{p\in\Sing H}\delta_p(H)$.

\item\label{lem:genus:c}
For all irreducible hyperplane sections~$H'\subset X$,
we have $p_a(H)=p_a(H')$ and $p_g(H)\geq p_g(H')\geq 0$.
Moreover, if $H'$ is also general, then $p_g(H)=p_g(H')$.
\end{claims}
\end{lemma}

\begin{proof}
\ref{lem:genus:a}
The first assertion follows from the Bertini theorems at~\citep[\href{https://stacks.math.columbia.edu/tag/0G4C}{Tag 0G4C}]{2018stacks}
and the second assertion follows from \citep[Theorem~17.16]{1992}.

\ref{lem:genus:b}
This is the genus formula at \citep[Exercise~IV.1.8a]{1977} (see also \citep[Section~2.4.6]{2016}).

\ref{lem:genus:c}
It follows from \citep[Exercise~V.1.3]{1977} that $p_a(H)$ only depends
on the linear equivalence class of~$H$. A non-general hyperplane section~$H'\subset X$
maybe more singular than $H$ and thus $\sum_{p\in\Sing H}\delta_p(H)\leq \sum_{p\in\Sing H'}\delta_p(H')$
so that $p_g(H)\geq p_g(H')\geq 0$ by \ASN{lem:genus:b}.
If $H'$ is also general, then $p_g(H)=p_g(H')$ as a straightforward consequence of
\ASN{lem:genus:b} and \Cref{def:di}.
\end{proof}

Suppose that $X\subset\P^n$ is a complex surface.
The \df{sectional arithmetic genus}~$a(X)$
and the \df{sectional geometric genus}~$g(X)$
are defined
as the arithmetic and geometric genus of a general hyperplane section of $X$, \resp.
The \df{total delta invariant} of~$X$ is defined as $\delta(X):=a(X)-g(X)$.
These invariants are well defined by \Cref{lem:genus}.

\begin{definition}
\label{def:delta}
Let $X\subset\P^n$ be a complex surface
\st the 1-dimensional part of the singular locus of~$X$
admits the following decomposition
into irreducible complex curve components:
$
C_1\cup\cdots\cup C_r.
$
A \df{sectional delta invariant} for $X$ is a function
\[
\Delta_X\c \set{C_i}{1\leq i\leq r} \to \Z_{>0}
\]
that satisfies the following
axioms for all $1\leq i\leq r$, real structures $\sigma\c X\to X$,
and complex projective automorphisms~$\alpha\in \aut\P^n$:
\begin{Menum}{A\arabic*.}{A\arabic*}
\item\label{a1}
$\Delta_X(C_1)+\cdots+\Delta_X(C_r)=\delta(X)$.

\item\label{a2}
$\Delta_X( C_i)\geq \deg C_i$.

\item\label{a3}
$\Delta_X(C_i)=\Delta_X(\sigma(C_i))$ and $\Delta_X(C_i)=\Delta_{\alpha(X)}(\alpha(C_i))$.

\item\label{a4}
If $\rho\c X\dto Z\subset\P^m$ is a complex birational linear map
\st $g(X)=g(Z)$ and
$\rho|_{C_i}\c C_i\dto \rho(C_i)$ is birational, then
\[
{\Delta_Z(\rho(C_i))}\cdot {\deg C_i}={\Delta_X(C_i)}\cdot{\deg\rho(C_i)}.
\]

\item\label{a5}
If $\rho\c X\to Z\subset\P^m$ is a generically finite $q:1$ linear morphism
\st
$\deg \rho(C_i)=(\deg C_i)/q$, then
\[
\Delta_Z(\rho(C_i))\cdot\deg C_i\geq\Delta_X(C_i)\cdot\deg\rho(C_i).
\]
\end{Menum}
We write $\Delta(C)$ instead of~$\Delta_X(C)$
if it is clear from the context that $C\subset X$.
\END
\end{definition}

\begin{proposition}
\label{prp:delta}
If $X\subset \P^n$ is a complex surface, then
there exists a sectional delta invariant~$\Delta_X$.
\end{proposition}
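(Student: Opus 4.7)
The plan is to define $\Delta_X(C_i) := \deg C_i \cdot \delta(C_i,X)$, where $\delta(C_i,X)$ denotes the common delta invariant of a general hyperplane section $H \subset X$ at any of its singular points lying on $C_i$. This is well-defined: \LEM{genus}(a) together with the fact that isolated singularities of $X$ miss a general $H$ yields
\[
\sng H = \bigsqcup_{i=1}^r (C_i \cap H).
\]
Bertini applied to each $C_i$ shows that $C_i \cap H$ consists of $\deg C_i$ distinct smooth points of $C_i$, and by genericity of $H$ the analytic germ of $H$ at each of these points has a common analytic type, producing a positive integer $\delta(C_i,X) \geq 1$ since such a point is singular on $H$.

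Axioms (1)--(3) will follow quickly. Summing pointwise contributions and invoking \LEM{genus}(c) yields axiom (1): $\sum_i \Delta_X(C_i) = \sum_{p \in \sng H} \delta_p(H) = p_a(H) - p_g(H) = \delta(X)$. Axiom (2) is immediate from $\delta(C_i,X) \geq 1$. For axiom (3), both $\sigma$ and any $\alpha \in \aut \P^n$ act as isomorphisms, hence preserve the analytic germ of $H$ at each point and the degree of each curve, so both $\delta(C_i,X)$ and $\deg C_i$ are invariant.

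Axiom (4) reduces, after cancelling the common factor $\deg C_i \cdot \deg \rho(C_i)$, to the identity $\delta(\rho(C_i),Z) = \delta(C_i,X)$. Since $\rho \c X \dto Z$ is birational and $\rho|_{C_i}$ is birational, a general point $p \in C_i$ lies in the dense open subset on which $\rho$ is an analytic isomorphism. The pullback $H_X := \rho^{-1}(H_Z) \cap X$ of a general hyperplane $H_Z \subset \P^m$ is a hyperplane section of $X$ sufficiently general to realise the generic delta invariant along $C_i$, and the germs $(H_X, p)$ and $(H_Z, \rho(p))$ are locally analytically isomorphic via $\rho$. Hence $\delta_p(H_X) = \delta_{\rho(p)}(H_Z)$, proving the equality.

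The main obstacle is axiom (5). For a finite $q\colon 1$ linear morphism $\rho \c X \to Z$ with $\deg \rho(C_i) = \deg C_i / q$, the inequality reduces (again after cancelling $\deg C_i \cdot \deg \rho(C_i)$) to $\delta(\rho(C_i), Z) \geq \delta(C_i, X)$. Pick a general point $q_0 \in \rho(C_i)$ with preimages $p_1, \dots, p_s \in C_i$ (with $s = q$ in the unramified case). A general hyperplane $H_Z$ through $q_0$ pulls back to a hyperplane section $H_X = \rho^{-1}(H_Z) \cap X$ containing all $p_j$, each with $\delta_{p_j}(H_X) = \delta(C_i, X)$ by genericity of tangent directions. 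The key local claim is $\delta_{q_0}(H_Z) \geq \delta_{p_j}(H_X)$ for some (equivalently any) $j$. In the étale case the completions $\widehat{\cO_{H_Z, q_0}}$ and $\widehat{\cO_{H_X, p_j}}$ are isomorphic, giving equality; in the ramified case the extra gluing of sheets downstairs strictly increases the length of the conductor quotient $\widetilde{\cO_{H_Z, q_0}}/\cO_{H_Z, q_0}$ relative to $\widetilde{\cO_{H_X, p_j}}/\cO_{H_X, p_j}$. This conductor comparison inside the degree-$q$ field extension $K(H_X)/K(H_Z)$ is the delicate technical core of the argument; as an alternative, one can derive a summed version of the inequality via Euler characteristics applied to the exact sequence $0 \to \cO_{H_Z} \to \rho_*\cO_{H_X} \to Q \to 0$ and the Riemann--Hurwitz formula for $\tilde H_X \to \tilde H_Z$, from which the required statement follows after averaging over general hyperplanes.
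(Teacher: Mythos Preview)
Your construction of $\Delta_X$ coincides with the paper's: they set $\Delta_X(C_i):=\sum_{p\in H\cap C_i}\delta_p(H)$ for a general hyperplane section~$H$, which equals your $\deg C_i\cdot\delta(C_i,X)$ once one knows all the pointwise delta invariants along $C_i\cap H$ agree. The paper proves this equality (their Claim~1) via a Galois-type argument over the function field $\C(t_0,\ldots,t_n)$; your appeal to ``genericity'' is a reasonable shorthand for the same fact. Axioms~1--3 and the inequality direction of axiom~5 then go through essentially as you say.

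The genuine gap is in axiom~4. You assert that the pullback $H_X=\rho^{-1}(H_Z)$ is ``a hyperplane section of $X$ sufficiently general to realise the generic delta invariant along $C_i$'', and from this deduce $\delta_p(H_X)=\delta(C_i,X)$. But $H_X$ is \emph{not} a general hyperplane section of~$X$: it is constrained to contain the center of the linear projection~$\rho$, so the hyperplanes $H_X$ sweep out only a proper linear subsystem. A priori every member of this subsystem could have $\delta_p(H_X)>\delta(C_i,X)$, which would give only the inequality $\delta(\rho(C_i),Z)\geq\delta(C_i,X)$, not the equality that axiom~4 demands. You never invoke the hypothesis $g(X)=g(Z)$, and this is precisely what it is there for. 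The paper's argument runs as follows: one always has $\Delta_X(C_i,H_X)\geq\Delta_X(C_i)$ by upper semicontinuity; but since $H_X$ is birational to the general $H_Z$ via~$\rho$, one has $p_g(H_X)=p_g(H_Z)=g(Z)=g(X)$, hence $\sum_{p\in\sng H_X}\delta_p(H_X)=p_a(H_X)-p_g(H_X)=\delta(X)=\sum_i\Delta_X(C_i)$, which forces $\Delta_X(C_i,H_X)=\Delta_X(C_i)$ for every~$i$. The authors themselves remark that they do not know how to drop the $g(X)=g(Z)$ hypothesis; your argument as written would, if correct, prove this stronger statement, so the unjustified genericity claim cannot be patched by a routine argument.
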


\begin{proof}
Let $\cH$ denote the set of irreducible and reduced hyperplane sections of~$X$.
Suppose that $H\subset X$ is a general hyperplane section of $X$.
We assume the notation at \Cref{def:delta}
and
consider the following functions
for all $1\leq i\leq r$ and $W\in \cH$:
\[
\Delta_X(C_i,W):=\sum_{p\in W\cap C_i}\delta_p(W)
\qquad\text{and}\qquad
\Delta_X(C_i):=\Delta_X(C_i,H).
\]
It follows from \Cref{lem:genus}\ref{lem:genus:c} that
the sectional delta invariant~$\Delta_X(C_i)$ does not depend on the
choice of a general hyperplane section~$H$ and is therefore well-defined.
Since $\Sing H=(C_1\cup\cdots\cup C_r)\cap H$ by \Cref{lem:genus}\ref{lem:genus:a},
\AXM{a1} is a direct consequence of \Cref{lem:genus}\ref{lem:genus:b}.
\AXM{a2} is a direct consequence of B\'ezout's theorem.

{\bf Claim 1.}~{\it
For all $p,q\in C_i\cap H$, $1\leq i\leq r$, real structures~$\sigma\c X\to X$
and complex projective automorphisms~$\alpha\in \aut\P^n$ we have}
\[
\delta_p(H)=\delta_q(H)=\delta_{\sigma(p)}(\sigma(H))=\delta_{\alpha(p)}(\alpha(H)).
\]
Let $t_0\,x_0+\cdots+t_n\,x_n$ with $t\in\P^n$ be the defining polynomial of the complex hyperplane~$H$.
Notice that the ideal of~$C_i\subset\P^n$ is generated by polynomial forms in~$\C[x]=\C[x_0,\ldots,x_n]$.
Instead over the coefficient field $\C$, let us consider the defining polynomials
of the complex curve~$C_i$ and the hyperplane over the function field~$\C(t)=\C(t_0,\ldots,t_n)$.
As we do not choose any particular value for $t\in\P^n$, we ensure that $H$ is general.
We consider a field extension $E/\C(t)$ \st its Galois group acts transitively on the elements in $H\cap C_i$.
The delta invariant is an algebraic invariant in the sense that
it can be computed in the ring~$E[x]$ from the defining polynomials.
Hence, all algebraic invariants (and in particular the delta invariant) are
the same for all elements in~$C_i\cap H$.
Since both $\sigma$ and $\alpha$ act on~$E[x]$ as automorphisms,
we find that an algebraic invariant of $p$ is equal to $\sigma(p)$ and $\alpha(p)$ for all $p\in X$.
This concludes the proof for Claim~1.


\AXM{a3} is a direct consequence of Claim~1.

We now proceed with the proof for \AXM{a4}.
Let $H'\subset Z$ be a general hyperplane section and let $W:=\rho^{-1}(H')$.
We deduce from \Cref{lem:genus}\ref{lem:genus:a}
that $H'$ must be irreducible and reduced, which implies that $W\in\cH$.
However, $W$ is itself not necessarily general
as the hyperplane spanned by~$W$ passes through the center of the linear projection~$\rho$.
Let $1\leq i\leq r$ and $U_p\subset X$ be an arbitrary small complex analytic neighborhood of~$p\in W\cap C_i$.
Since $\rho$ is birational, it is defined at $W\cap C_i$ and thus restricts to
a complex analytic isomorphism $U_p\to\rho(U_p)$.
The delta invariant $\delta_p(W)$ is a complex analytic invariant by \citep[Exercise~IV.1.8c]{1977}
and thus
\[
\delta_p(W)=\delta_{\rho(p)}(H').
\]
We deduce from Claim~1 applied to $H'\subset Z$ and B\'ezout's theorem that
\[
\Delta_Z(\rho(C_i))=\deg(\rho(C_i))\cdot \delta_{\rho(p)}(H').
\]
Again by B\'ezout's theorem it follows that
\[
\Delta_X(C_i,W)=\deg(C_i)\cdot \delta_{p}(W).
\]
We know from \Cref{lem:genus}\ref{lem:genus:c} that $\Delta_X(C_i)\leq \Delta_X(C_i,W)$.
Hence, we established that
\[
\frac{\Delta_Z(\rho(C_i))}{\deg\rho(C_i)}
=
\frac{\Delta_X(C_i,W)}{\deg C_i}
\geq
\frac{\Delta_X(C_i)}{\deg C_i}.
\]
However, since $g(X)=g(Z)$ by assumption, we have
the equality~$\Delta_X(C_i,W)= \Delta_X(C_i)$
and thus we conclude that \AXM{a4} holds.

The proof of \AXM{a5} follows the proof of \AXM{a4}.
Again the finite morphism~$\rho$ is defined at $p\in W\cap C_i$.
In this case however, $\rho$ defines
a complex analytic isomorphism~$U_{p'}\to\rho(U_{p'})$ for each point~$p'$ in the fibre~$(\rho^{-1}\circ\rho)(p)$.
In other words, the $q:1$~covering $\rho$ defines locally a complex analytic ismorphism on each of its $q$ sheets.
The remaining arguments are the same as for \AXM{a4}, except we do not need to prove the equality.
This concludes the proof for the only remaining \AXM{a5}.
\end{proof}

\begin{remark}
We will not use \AXM{a4} in this article, but
we believe that the notion of sectional delta invariant in~\Cref{def:delta}
is of interest outside the scope of this article.
We conjecture that the $g(X)=g(Z)$ assumption in \AXM{a4} can be
omitted and that the inequality in \AXM{a5} can be replaced by an equality.
\END
\end{remark}

\section{Divisor classes of curves on singular surfaces}
\label{sec:div}

In this section, we characterize divisor classes of
complex curves on the singular surfaces in~$\P^n$
that will appear in \textsection\ref{sec:central}.
We also compute the total delta invariants of these surfaces.

A \df{smooth model} of a surface $X\subset\P^n$ is a birational morphism $\varphi\c Y\to X$
from a nonsingular surface~$Y$, that does not contract complex $(-1)$-curves.
See \citep[Theorem~2.16]{2007kol} for the existence and uniqueness of the smooth model
up to biregular isomorphisms.

The \df{N\'eron-Severi lattice}~$N(X)$ is
an additive group  defined by the divisor classes on~$Y$ up to numerical equivalence.
This group comes with a unimodular intersection product~$\cdot$
and a unimodular involution~$\sigma_{X*}\c N(X)\to N(X)$ induced by the real structure~$\sigma_X\c X\to X$.
By default, $\sigma_{X*}$ is the identity map~$\id\c N(X)\to N(X)$.

Suppose that $C\subset X$ is a complex and possibly reducible curve.
Let $C_Y\subset Y$ denote the union of complex curves
in~$\varphi^{-1}(C)$ that are not contracted to complex points by the morphism~$\varphi$.
The \df{class}~$[C]\in N(X)$ of $C$ is defined as the divisor class of~$C_Y$.

The \df{class of hyperplane sections} $\h\in N(X)$ is defined as the
class of a general hyperplane section of $X$.
The \df{canonical class} $\k\in N(X)$ is defined as the canonical class of the smooth model~$Y$
(see \citep[Example~II.8.20.3]{1977} or \citep[\textsection1.4.1]{2016}).

\begin{remark}
\label{rmk:N}
If $C,D\subset X$ are complex irreducible curves \st $[C]\cdot [D]>0$,
then $|C\cap D|>0$. However, we cannot assume
that $|C\cap D|=[C]\cdot [D]$.
In particular, if $\varphi^{-1}(C)$ and $\varphi^{-1}(D)$ are disjoint in~$Y$,
then $C$ and $D$ may still meet at the singular locus of~$X$.
\END
\end{remark}

A surface $X\subset\P^n$ has \df{lattice type~$(\alpha,\beta,\gamma)\in\Z^3_{\geq 0}$} if
there exists a smooth model $\P^1\times\P^1\to X$ and
\begin{Mlist}
\item $N(X)\cong \bas{\l_0,\l_1}_\Z$ with $\l_0^2=\l_1^2=0$ and $\l_0\cdot\l_1=1$,
\item $\k=-2\,\l_0-2\,\l_1$ is the canonical class,
\item $\h=\alpha\,\l_0+\beta\,\l_1$ is the class of hyperplane sections, and
\item the total delta invariant $\delta(X)$ is equal to~$\gamma$.
\end{Mlist}

\begin{remark}
\label{rmk:lt}
If $C\subset X$ is a complex curve that is not contained in the singular locus of~$X$,
then $\deg C=\h\cdot [C]$.
In particular, we observe the following:
\begin{Mlist}
\item If $X$ has lattice type~$(2,2,8)$ and $\deg C\leq 2$, then $\deg C=2$ and $[C]\in\{\l_0,\l_1\}$.
\item If $X$ has lattice type~$(2,1,3)$ and $\deg C=1$, then $[C]=\l_0$.
\item If $X$ has lattice type~$(2,1,3)$ and $\deg C=2$, then $[C]=\l_1$. \END
\end{Mlist}
\end{remark}

\begin{proposition}
\label{prp:N2}
A smooth surface~$X\subset\P^n$ of degree two has lattice type~$(1,1,0)$
and either $\sigma_{X*}(\l_0)=\l_1$ or $\sigma_{X*}=\id$.
\end{proposition}

\begin{proof}
See \citep[Examples~II.6.6.1 and~II.8.20.3]{1977}.
\end{proof}

\begin{lemma}
\label{lem:gg}
Suppose that $X\subset\P^n$ is a surface with smooth model~$\varphi\c Y\to X$,
canonical class~$\k$ and class of hyperplane sections~$\h$.
\begin{claims}
\item\label{lem:gg:a}
If $H\subset X$ is a general hyperplane section, then
$p_g(H)=\tfrac{1}{2}(\h^2+\h\cdot\k)+1$.

\item\label{lem:gg:b}
If $C\subset X$ is a complex curve \st $[C]^2+[C]\cdot\k<-2$,
then $\varphi^{-1}(C)$ contains at least two complex curves that are not contracted
via $\varphi$ to complex points.
\end{claims}
\end{lemma}

\begin{proof}
\ref{lem:gg:a}
Suppose that $\varphi\c Y\to X$ is a smooth model and let $D$ be the proper transform of $H$ along $\varphi$.
We have $p_a(D)=\tfrac{1}{2}(\h^2+\h\cdot\k)+1$ by the arithmetic genus formula \citep[Exercise~V.1.3]{1977}.
It follows from the Bertini theorem at~\citep[Corollary~10.9]{1977} that the general curve $D$ in the
linear series associated to the morphism~$\varphi$ is smooth, which implies that $p_a(D)=p_g(D)$.
As the geometric genus is a birational invariant, it follows that $p_g(D)=p_g(H)$ as asserted.

\ref{lem:gg:b}
If $\varphi^{-1}(C)$ is irreducible, then
$0\leq p_g(C)\leq \tfrac{1}{2}([C]^2+[C]\cdot\k)+1$
by the geometric genus formula
(see
\citep[\textsection2.4.6]{2016} or
\citep[Remark~IV.1.1.1 and Exercise~IV.1.8.a]{1977}).
\end{proof}

\begin{lemma}
\label{lem:delta}
Suppose that $X\subset\P^n$ is a surface of degree~$d$
with canonical class~$\k$ and class of hyperplane sections~$\h$.
\begin{claims}
\item\label{lem:delta:a}
If $X\subset\P^3$, then
$\delta(X)=\frac{d}{2}\,(d-4)-\frac{1}{2}\,\h\cdot\k$.

\item\label{lem:delta:b}
If $X\subset\S^3$, then
$\delta(X)=\frac{d}{2}\,(\frac{d}{2}-3)-\frac{1}{2}\,\h\cdot\k$.
\end{claims}
\end{lemma}

\begin{proof}
Suppose $H\subset X$ is a general hyperplane section so that $\h=[H]$ and $d=\h^2$.

\ref{lem:delta:a}
Since $H$ is a planar curve so that $a(X)=p_a(H)=\frac{1}{2}(d-1)(d-2)$ by \citep[Example~2.17]{2016},
we conclude from \Cref{lem:gg}\ref{lem:gg:a} that $\delta(X)$ is as asserted.

\ref{lem:delta:b}
We observe that $H$ is a complete intersection
curve of degree~$d$ that is contained in a two-sphere~$Q\subset\S^3$.
By \Cref{prp:N2}, we have $N(Q)\cong\bas{\l_0,\l_1}_\Z$, $\h_Q=\l_0+\l_1$ and $\k_Q=-2\,h_Q$.
Suppose that $[H]_Q$ is the class of $H$ in $N(Q)$
so that
\[
\h_Q\cdot [H]_Q=(\l_0+\l_1)\cdot (\alpha\,\l_0+\alpha\,\l_1)=2\,\alpha=d.
\]
We find that $\alpha=\frac{1}{2}\,d$ so that
$a(X)=p_a(H)=\frac{1}{2}([H]_Q^2+\k_Q\cdot[H]_Q)+1=\frac{1}{4}d^2-d+1$
by the arithmetic genus formula \citep[Exercise~V.1.3]{1977}.
Since $g(X)=p_g(H)=\frac{1}{2}(d+\h\cdot\k)+1$ by \Cref{lem:gg}\ref{lem:gg:a},
we conclude that $\delta(X)=a(X)-g(X)$ is as asserted.
\end{proof}

\begin{proposition}
\label{prp:smooth8}
If $X\subset \S^3$ celestial surface of degree eight,
then there exists a biregular isomorphism~$f\c \P^1\times\P^1\to X_N\subset \P^8$
and linear projection $\eta\c \P^8\dto \P^4$ \st
\begin{Mlist}
\item the components of $f$ form a basis for the vector space of bidegree~$(2,2)$ forms on~$\P^1\times\P^1$,
\item the restriction $\eta|_{X_N}\c X_N\to X$ is a birational morphism \st $\deg X_N=\deg X$, and
\item the composition $\eta\circ f\c \P^1\times\P^1\to X$ defines a smooth model for $X$.
\end{Mlist}
\end{proposition}

\begin{proof}
It follows from \citep[Theorem~11]{2001} that there exists a complex birational map $g\c\P^1\times\P^1\dto X$
whose components form a complex subspace of the vector space~$V$
of bidegree~$(2,2)$ forms on~$\P^1\times\P^1$.
The components of $f$ form a real basis of~$V$ so that $f$ is associated to the anticanonical class of~$\P^1\times\P^1$.
It follows from \citep[Theorem~8.3.2(iii)]{2012dol}
that $X_N=f(\P^1\times\P^1)$ is the anticanonical model of a
del Pezzo surface of degree~8
and via $f$ biregular isomorphic to~$\P^1\times\P^1$
($X_N$ is called a ``Veronese-Segre surface'' in \citep[\textsection8.4.1]{2012dol}).

Alternatively, we show that $\deg X_N=8$ and $f$ is a biregular isomorphism by choosing a basis for $V$ \st
\[
\begin{array}{@{}l@{}l@{}l@{}l@{}l@{}l@{}l@{}l@{}l@{}l@{}}
f(x,y)
&=(
x_0^2\,y_0^2   &:x_0^2\,y_0\,y_1    &:x_0^2\,y_1^2  &:
x_0\,x_1\,y_0^2&:x_0\,x_1\,y_0\,y_1 &:x_0\,x_1\,y_1^2 &:
x_1^2\,y_0^2   &:x_1^2\,y_0\,y_1    &:x_1^2\,y_1^2) \\
&=(z_0&:z_1&:z_2&:z_3&:z_4&:z_5&:z_6&:z_7&:z_8).
\end{array}
\]
We verify that for all $0\leq k\leq 5$ and
$
(i,j)\in\{(0,1),(1,2),(3,4),(4,5),(6,7),(7,8)\}
$, either
$(z_k,z_{k+3})=(0,0)$, $(z_i,z_j)=(0,0)$ or
$(x_0:x_1;y_0:y_1)=(z_k:z_{k+3};z_i:z_j)$.
Since $z_i\neq 0$ for some $0\leq i\leq 8$, it follows that $f$ a biregular isomorphism.
The pullback of two general hyperplane sections of $X_N$
are general bidegree $(2,2)$ forms and such forms intersect in~$8$ points.
This implies that $\deg X_N=8$.

As a direct consequence of the definitions, $g=\eta\circ f$ for some linear projection
$\eta\c \P^8\dto \P^4$ with linear variety $W\subset \P^8$ as projection center.
If $W\cap X_N\neq\varnothing$, then $\deg X<\deg X_N=8$ and thus we arrive at a contradiction.
Hence, $\eta\circ f$ is a morphism and since $\P^1\times\P^1$ does not
have complex $(-1)$-curves, we conclude that $\eta\circ f$ defines
a smooth model for~$X$.
\end{proof}

\begin{proposition}
\label{prp:N8}
A celestial surface~$X\subset\S^3$ of degree eight has lattice type $(2,2,8)$.
\end{proposition}

\begin{proof}
It follows from \Cref{prp:smooth8} and \citep[Example~II.8.20.3]{1977}
that $N(X)=\bas{\l_0,\l_1}_\Z$ and $-\k=\h=2\,\l_0+2\,\l_1$.
Hence, $\delta(X)=8$ by \Cref{lem:delta}\ref{lem:delta:b}.
\end{proof}

A \df{pencil} on a surface $X\subset\S^3$ is defined as
an irreducible hypersurface~$P\subset X\times \P^1$
\st the 1st and 2nd projections $\pi_1\c X\times \P^1\dto X$
and $\pi_2\c X\times \P^1\dto \P^1$ are dominant.
The \df{member}~$P_i\subset X$ for index $i\in \P^1$ of the pencil~$P$
is defined as
the Zariski closure of~$\pi_1(P\cap X\times\{i\})$.
We call a complex point $p\in X$ a \df{base point} of~$P$,
if $p\in P_i$ for all~$i\in\pi_2(P)$.
We call $P$ a \df{pencil of conics} if
$P_i$ is a complex irreducible conic for almost all~$i\in \P^1$.
We call $P$ a \df{pencil of circles} if it is a pencil of conics \st
$P_i$ is a circle for infinitely many~$i\in \P^1_\R$.

\begin{remark}
If $P\subset X\times \P^1$ is a pencil of conics on a celestial surface $X\subset \S^3$,
then it follows from \citep[Theorem~9]{2001} that
$P$ is the Zariski closure of the graph of a rational map~$X\dto\P^1$ whose fibers are complex conics.
This implies that the first projection~$\pi_1$ is birational.
\END
\end{remark}

\begin{proposition}
\label{prp:P8}
Suppose that $X\subset\S^3$ is a celestial surface of degree eight.
\begin{claims}
\item\label{prp:P8:a} The surface $X$ is 2-circled and the two pencils of circles
that cover $X$ are base point free.
\item\label{prp:P8:b} If $L\subset X$ is a complex line, then $L\subset\Sing X$ and $L$ is non-real.
\item\label{prp:P8:c} The singular locus of~$X$ does not contain isolated singularities.
\end{claims}
\end{proposition}

\begin{proof}
Let $f\c \P^1\times\P^1\to X_N\subset \P^8$ and $\eta\c \P^8\dto \P^4$
be as in \Cref{prp:smooth8}.

\ref{prp:P8:a}
Let $\pi_1,\pi_2\c\P^1\times\P^1\to \P^1$ denote the projections to the
first and second component of~$\P^1\times\P^1$, \resp.
It follows from \Cref{prp:smooth8} that $X_N$ is covered by no more than two
pencils of conics with members $\{f(\pi_i^{-1}(t))\}_{t\in\P^1}$ for $i\in\{1,2\}$
and these pencils are base point free.
The set of projected complex conics~$\{(\eta\circ f)(\pi_i^{-1}(t))\}_{t\in\P^1}$ defines
for each~$i\in\{1,2\}$ a pencil of circles on~$X$.
Thus, if a pencil of circles on~$X$ has a base point,
then this complex point must
have a complex curve in~$X_N$ as preimage \wrt $\eta$.
Since $\deg X_N=\deg X$, the center of the linear projection~$\eta$ does not meet $X_N$.
We observe that $\eta=\eta_4\circ\cdots\circ\eta_7$ where
$\eta_i\c\P^{i+1}\dto\P^i$ is for all~$7\geq i\geq 4$ a linear projection
whose center lies outside the surface~$(\eta_{i+1}\circ\cdots\circ\eta_7)(X_N)$.
We claim that for all $7\geq i\geq 4$ and complex curves~$C\subset X_N$
the complex image~$(\eta_i\circ\cdots\circ\eta_7)(C)$
is not a complex point~$p$.
Indeed, this would imply that \Wlog the Zariski closure of the preimage~$\eta_i^{-1}(p)$
is a complex line in
$(\eta_{i+1}\circ\cdots\circ\eta_7)(X_N)$
that passes through the center of~$\eta_i$ although this center lies outside the surface.
Since $\eta$ does not contract complex curves to complex points,
we deduce that the pencils of circles on~$X$ must be base point free as well.

\ref{prp:P8:b}
By \Cref{prp:smooth8}, we have $L=\eta(C)$ for some complex curve~$C\subset X_N$ \st $\deg C\geq 2$.
Since $X_N$ does not contain complex lines and $\S^3$ does not contain real lines,
it follows that $L\subset\Sing X$ is a non-real line component.

\ref{prp:P8:c}
Suppose by contradiction that~$q\in \Sing X$ is a complex isolated singularity
and let~$\rho\c\P^4\dto\P^3$ be the complex linear projection with center~$q$.
Let
$H\subset X$ be a general hyperplane section of $X$,
and let $H_q\subset X$ be a general hyperplane section of $X$ containing $q$.
We observe that $H_q$ is the preimage of a general hyperplane section~$\rho(H_q)$ of~$\rho(X)$.
It follows from \Cref{lem:genus}\ref{lem:genus:a} that
$\Sing H=H\cap \Sing X$ and
$\Sing \rho(H_q)=\rho(H_q)\cap\Sing \rho(X)$.
Since the restricted map~$\rho|_{X}\c X\dto \rho(X)$ is birational,
we find that $\Sing H_q=H_q\cap \Sing X$ so that $|\Sing H_q|=|\{q\}|+|\Sing H|$.
Hence, $p_g(H_q)<p_g(H)$ by \Cref{lem:genus}\ref{lem:genus:c}.
It follows from \citep[Theorem~5]{2001} that $0\leq p_g(H)\leq 1$ and thus $p_g(H_q)=p_g(\rho(H_q))=0$.
We know from \ASN{prp:P8:a} that $q$ is not a base point for a pencil of circles.
Therefore, $\rho(X)$ is covered by two pencils of conics
and not covered by complex lines.
In other words, a general hyperplane section of $\rho(X)$ has geometric genus~0.
We arrived at a contradiction as $\rho(X)$
is by \citep[Theorem~8]{2001} either complex ruled or
contains infinitely many complex conics through a general complex point.
We conclude that the surface~$X$ does not have isolated singularities.
\end{proof}

\begin{proposition}
\label{prp:N4}
If $X\subset\P^3$ is a surface of degree four that is covered by
base point free pencils of lines and conics,
then $X$ has lattice type~$(2,1,3)$.
\end{proposition}

\begin{proof}
Suppose that $F,G\subset X\times\P^1$ denote the pencils of lines and conics.
Both pencils on the non-planar surface~$X$ are base point free,
and thus for almost all $(i,j)\in\P^1\times\P^1$
the members $F_i$ and $G_j$ intersect in a single complex point~$p_{ij}$.
Therefore, there exists a birational map $f\c\P^1\times\P^1\dto X$ that sends~$(i,j)$ to~$p_{ij}$.
Let $W$ denote the 6-dimensional vector space of bidegree~$(2,1)$ forms on~$\P^1\times \P^1$.
The parameter lines of $f$ are lines and conics,
and thus the components of~$f$ must form a basis for some 4-dimensional subspace of~$W$.
A map whose components form a basis for $W$
defines a birational morphism~$g\c\P^1\times\P^1\to X'\subset\P^5$ \st $\deg X'=4$.
Notice that $f$ is defined by the composition of
$g$ with a linear projection~$u\c\P^5\dto \P^3$.
Since $\deg X'=\deg X$, the center of $u$ lies outside~$X'$.
This implies that $f$ is a morphism and thus a smooth model for $X$.
We conclude that $N(X)=\bas{\l_0,\l_1}_\Z$ and $\h=2\,\l_0+\l_1$.
We know from \citep[Example~II.8.20.3]{1977} that $\k=-2\,\l_0-2\,\l_1$.
It follows \Cref{lem:delta}\ref{lem:delta:a} that $\delta(X)=3$ and thus
$X$ has lattice type~$(2,1,3)$.
\end{proof}

\section{Singular components via central projection}
\label{sec:central}

The central projection of a great celestial surface $X\subset\S^3$
of degree eight is a surface in $\P^3$ of degree four.
We show that the intersection
of this quartic surface with the branching locus
consist of four complex lines.
We then argue that the ramification locus $X\cap\E$ must be a union of two
left generators and two right generators.
This allows us to recover in \Cref{prp:t} the complete singular locus of $X$
by using the sectional delta invariant.
Since the right Clifford translations the
two right generators in~$X$ invariant,
we conclude in \Cref{prp:star} that $X(\R)$ is a pointwise Hamiltonian product
of circles in~$S^3$.

The \df{central projection} $\tau\c\S^3\to\P^3$
sends $(x_0:\ldots:x_4)$ to $(x_1:x_2:x_3:x_4)$.
Therefore, $\tau$ is a 2:1 linear projection
with ramification locus~$\E$ and
branching locus
\[
\tau(\E)=\set{y\in \P^3}{y_0^2+y_1^2+y_2^2+y_3^2=0}.
\]
We will call the central projection of a left generator
or right generator
into the branching locus $\tau(\E)$,
also \df{left generator} and \df{right generator}, \resp.

Notice that a fiber of the central projection $S^3\to\R^3$
induced by $\tau$
consists of antipodal points of $S^3$ and that great circles are
send to lines.

\begin{notation}
\label{ntn:X}
Let $\X\subset\S^3$ denote a great celestial surface of degree~$8$.
\END
\end{notation}

\begin{lemma}
\label{lem:tp}~
\begin{claims}
\item\label{lem:tp:a}
The surface~$\tau(\X)\subset\P^3$ is of degree~$4$
and has no isolated singularities.

\item\label{lem:tp:b}
The surface $\tau(\X)$
is covered by
exactly one pencil of lines
and
exactly one pencil of conics.
Both these pencils are base point free.

\item\label{lem:tp:c}
The lattice type of~$\tau(\X)$ is equal to~$(2,1,3)$.

\end{claims}
\end{lemma}

\begin{proof}
\ref{lem:tp:a}
The great circles in $\X$ are centrally projected to lines in $\tau(\X)$.
It follows that $\tau$ defines a 2:1 covering $\X\to \tau(\X)$,
which is a local complex analytic isomorphism on each of its two sheets.
By definition, $\deg \tau(\X)=\tau(\X)\cap H\cap H'$ for
some general hyperplanes $H,H'\subset\P^3$
and the preimages $\tau^{-1}(H),\tau^{-1}(H')\subset\P^4$
define hyperplanes that pass through $(1:0:0:0:0)$.
It follows that $\deg\tau(\X)=\tfrac{1}{2}\deg\X=4$.
We deduce from \Cref{prp:P8}\ref{prp:P8:c}
that $\Sing\X$, and thus $\Sing\tau(\X)$, does not contain isolated singularities.

\ref{lem:tp:b}
Because $\deg \tau(\X)\neq 2$, it follows that $\tau(\X)$ is not doubly ruled,
and thus $\X$ is not covered by two pencils of great circles.
\ASN{lem:tp:b} is now a
straightforward consequence of \Cref{prp:P8}\ref{prp:P8:a}.

\ref{lem:tp:c}
Direct consequence of
\Cref{prp:N4} and Assertions~\ref{lem:tp:a} and \ref{lem:tp:b}.
\end{proof}

A curve in a surface is called a \df{double curve} if a general
complex point in this curve has multiplicity two in the surface.
This is compatible with the definition of double circle in \Cref{sec:intro}.

\begin{lemma}
\label{lem:tE}
The surface $\tau(\X)$ has lattice type~$(2,1,3)$
and there exist
two pairs of complex conjugate
generators $\tau(R)$, $\tau(\oR)$ and $\tau(L)$, $\tau(\oL)$,
and a line~$\tau(V)$
\st
their common incidences are as in \Cref{fig:tE} and
\begin{Mlist}
\item
$\tau(\X)\cap\tau(\E)=\tau(L)\cup\tau(\oL)\cup\tau(R)\cup\tau(\oR)$,
\item
$\Sing\tau(\X)=\tau(R)\cup\tau(\oR)\cup\tau(V)$ consist of three complex double lines,
\item
$[\tau(L)]=[\tau(\oL)]=\l_0$,
\item
$[\tau(R)]=[\tau(\oR)]=\l_1$ and $\Delta(\tau(R))=\Delta(\tau(\oR))=1$,
\item
$[\tau(V)]\in\{2\,\l_0,~\l_0\}$ and $\Delta(\tau(V))=1$.
\end{Mlist}
\end{lemma}

\begin{figure}[!ht]
\centering
\begin{tikzpicture}[xscale=0.3,yscale=0.3] 
\draw[thick,red] (-6, 2) -- ( 6, 2) node[right] {$\tau(R)$};
\draw[thick,red] (-6,-2) -- ( 6,-2) node[right] {$\tau(\oR)$};;
\draw[thick,blue] (-4, 4) -- (-4,-4) node[below] {$\tau(L)$};
\draw[thick,blue] ( 4, 4) -- ( 4,-4) node[below] {$\tau(\oL)$};
\draw[thick, black!50!green] ( 0, 4) -- ( 0,-4) node[below] {$\tau(V)$};
\draw[draw=black!50!green, fill=green!5] ( 0, 2) circle [radius=0.2] ;
\draw[draw=black!50!green, fill=green!5] ( 0,-2) circle [radius=0.2] ;
\draw[draw=blue, fill=blue!5!white]      (-4, 2) circle [radius=0.2];
\draw[draw=blue, fill=blue!5!white]      ( 4, 2) circle [radius=0.2];
\draw[draw=blue, fill=blue!5!white]      (-4,-2) circle [radius=0.2];
\draw[draw=blue, fill=blue!5!white]      ( 4,-2) circle [radius=0.2];
\end{tikzpicture}
\caption{Each line segment represents a complex line in~$\tau(\X)$.
Two line segments meet at a disk if and only if their corresponding complex lines intersect.}
\label{fig:tE}
\end{figure}

\begin{proof}
Recall from \Cref{lem:tp}\ref{lem:tp:c} that $\tau(\X)$ has lattice type~$(2,1,3)$.
Let $E$ be the scheme-theoretic intersection of $\tau(\X)$
with the hyperquadric~$\tau(\E)$ so that
\[
[E]=2\,\h=4\,\l_0+2\,\l_1.
\]
With ``scheme-theoretic'',
we mean that we take the ideal of the intersection into account and not just its zero set.
Let $A$ be a general great circle and let $B$ be a general small circle in $\X$.
Their central projections are a line~$\tau(A)$ and an irreducible conic~$\tau(B)$ in~$\tau(\X)$
(see \Cref{lem:tp}\ref{lem:tp:b}).
By \Cref{rmk:lt}, we have
\[
[\tau(A)]=\l_0
\quad\text{and}\quad
[\tau(B)]=\l_1.
\]
Since $\E_\R=\varnothing$,
both $A$ and $B$ meet the hyperplane section~$\E\subset\S^3$ in complex conjugate points.
By B\'ezout's theorem, $|\E\cap A|=|\E\cap B|=2$ and thus
\[
|E\cap\tau(A)|=2
\quad\text{and}\quad
|E\cap\tau(B)|=2.
\]
On the other hand, the scheme theoretic intersection numbers
are
\[
[E]\cdot [\tau(A)]=2
\quad\text{and}\quad
[E]\cdot [\tau(B)]=4.
\]
The line~$\tau(A)$ intersects~$\tau(\E)$ transversally in two complex conjugate points,
and thus $\tau(A)$ intersects $E$ transversally as well.
It follows that there exists possibly reducible components $F$ and $F'$
\st
\begin{Mlist}
\item $E=F\cup F'$, $|F\cap\tau(B)|=2$, $|F'\cap\tau(A)|=2$,
\item $[E]=2\,[F]+[F']$, $[F]\cdot [\tau(B)]=2$ and $[F']\cdot [\tau(A)]=2$.
\end{Mlist}
Hence, it follows that
\[
[F]=2\,\l_0
\quad\text{and}\quad
[F']=2\,\l_1.
\]
We know from \Cref{lem:gg}\ref{lem:gg:b} that $\varphi^{-1}(F)$ is reducible,
where $\varphi\c\P^1\times\P^1\to\tau(\X)$ denotes a smooth model.
Since $F\subset\tau(\E)$ is real, it follows that
\[
F=\tau(L)\cup \tau(\oL)
\quad\text{and}\quad
[\tau(L)]=[\tau(\oL)]=\l_0.
\]
Similarly, the preimage~$\varphi^{-1}(F')$ is by \Cref{lem:gg}\ref{lem:gg:b} reducible
and thus
there exists irreducible complex conjugate curves $\tau(R)$ and $\tau(\oR)$
\st
\[
F'=\tau(R)\cup \tau(\oR)
\quad\text{and}\quad
[\tau(R)]=[\tau(\oR)]=\l_1.
\]
Since $|\tau(A)\cap F'|=2$ and $\tau(\X)$ is not a plane spanned by $\tau(F')$,
we find that $F'$ is not an irreducible double conic.
Hence, $\tau(R)$ and $\tau(\oR)$ are either
\begin{Mlist}
\item complex conjugate irreducible conics, or
\item complex conjugate double lines.
\end{Mlist}
We know from \Cref{prp:N2} that $Q:=\tau(\E)$ has lattice type~$(1,1,0)$.
Let $\h_Q$ denote its class of hyperplane sections, $\k_Q$ its canonical class
and $[C]_Q$ the class of a complex curve~$C\subset Q$.
We have $[E]_Q=4\,h_Q=4\,\l_0+4\,\l_1$,
since $\deg\tau(\X)=4$ and $E$ is the scheme-theoretic intersection
of $Q$ with $\tau(\X)$.
We may assume \Wlog that $[\tau(L)]_Q=[\tau(\oL)]_Q=\l_0$ so that $[F]_Q=2\,\l_0$.
As $[F]\cdot [F']=4$, we find that $|F\cap F'|\leq 4$ and thus $[F]_Q\cdot [F']_Q\leq 4$.
We deduce that $[E]_Q=2\,[F]_Q+2\,[F']_Q$ with $[F]_Q=2\,\l_0$ and $[F']_Q=2\,\l_1$.
Hence, $[\tau(R)]_Q=[\tau(\oR)]_Q=\l_1$, which implies that
$\tau(R)$ and $\tau(\oR)$ are complex lines, and thus complex double lines in~$\tau(\X)$.

Since $\delta(\tau(\X))=3$,
it follows from Axioms~\ref{a1} and~\ref{a3} at~\Cref{def:delta} that
\[
\Delta(\tau(R))=\Delta(\tau(\oR))=1.
\]
Hence, by Axioms~\ref{a1}, \ref{a2} and~\Cref{lem:tp}\ref{lem:tp:a}, the remaining component of~$\Sing\X$ consists of
a double line~$\tau(V)$ \st
\[
\Delta(\tau(V))=1.
\]
If $H\subset\tau(X)$ is a hyperplane section containing the double line~$\tau(V)$,
then there exists $\alpha,\beta\in\Z_{>0}$ \st
\[
[H]=\alpha\,[\tau(V)]+\beta\,[H\setminus\tau(V)]=\h=2\,\l_0+\l_1.
\]
Therefore, we deduce that
\[
[\tau(V)]\in\{2\,\l_0,~\l_0,~\l_1,~\l_0+\l_1\}.
\]
First, suppose by contradiction that $[\tau(V)]=\l_1$.
We observe that $[\tau(V)\cup \tau(R)]=\alpha\,\l_1\neq\h$
for any $\alpha\in\Z_{>0}$.
Hence, it follows from B\'ezout's theorem that $\tau(V)\cap \tau(R)$
is not a hyperplane section of~$\tau(X)$.
This implies that $\tau(V)\cap \tau(R)=\varnothing$.
Moreover, $\tau(R)$, $\tau(\oR)$ and~$\tau(V)$ form three skew double lines
each with class~$\l_1$.
Recall that each line in $\tau(\X)$ has class~$\l_0$
and is a member of the pencil of lines that covers~$\tau(\X)$.
Each line in this pencil meets the three skew lines with class~$\l_1$.
We arrived at a contradiction, since $\tau(\X)$ is a not a doubly ruled quadric.

Next, suppose by contradiction that $[\tau(V)]=\l_0+\l_1$.
The double line $\tau(V)$ meets in this case the double line $\tau(R)$, because $[\tau(R)]=\l_1$.
Thus $\tau(V)\cup \tau(R)$ forms a hyperplane section by B\'ezout's theorem
so that $[\tau(V)\cup \tau(R)]=\h$.
This is a contradiction, since
$[\tau(V)\cup \tau(R)]=\alpha\,(\l_0+\l_1)+\beta\,\l_1\neq \h$ for all $\alpha,\beta\in\Z_{>0}$.

We established that
\[
[\tau(V)]\in\{2\,\l_0,~\l_0\}.
\]
Suppose by contradiction $\tau(V)\cap \tau(L)\neq \varnothing$.
In this case there exists by B\'ezout's theorem some line~$\tau(L')\subset\tau(\X)$ \st
$\tau(V)\cup \tau(L)\cup \tau(L')\subset\tau(\X)$ is a hyperplane section
and $[\tau(L)]=[\tau(L')]=\l_0$.
Thus, there exists $\alpha,\beta,\gamma\in\Z_{>0}$ \st
\[
[\tau(V)\cup \tau(L)\cup \tau(L')]=\alpha\,[\tau(V)]+\beta\,[\tau(L)]+\gamma\,[\tau(L')]=\h.
\]
We arrived at a contradiction,
as we already established that $[\tau(V)]\neq\alpha\,\l_1$.
Since
\[
\tau(V)\cap \tau(L)=\tau(V)\cap \tau(\oL)=\varnothing,
\]
the incidences as specified in \Cref{fig:tE} are correct.
\end{proof}

\begin{proposition}
\label{prp:t}
The surface $\X$ has lattice type~$(2,2,8)$
and there exist two pairs of complex conjugate generators~$R$, $\oR$
and $L$, $\oL$, and a great circle~$V$
\st their common incidences are as in \Cref{fig:t} and
\begin{Mlist}
\item $\X\cap\E=L\cup\oL\cup R\cup\oR$,
\item $\Sing\X=L\cup\oL\cup R\cup\oR\cup V$ consists of complex double curves,
\item $[L]=[\oL]=\l_0$, $\Delta(L)=\Delta(\oL)=1$,
\item $[R]=[\oR]=\l_1$, $\Delta(R)=\Delta(\oR)=2$,
\item $[V]\in\{2\,\l_0,~\l_0\}$ and $\Delta(V)=2$.
\end{Mlist}
Moreover, if $H\subset \P^4$ is a hyperplane containing the great double circle~$V$,
then there exists small circles $C$ and $C'$ \st
\[
H\cap \X=V\cup C\cup C'
\]
and $C(\R)$, $C'(\R)$ are antipodal small circles
in the Euclidean 2-sphere~$(H\cap\S^3)(\R)$.
General great and small circles in $\X$ have classes~$\l_0$ and $\l_1$, \resp.
\end{proposition}

\begin{figure}[!ht]
\centering
\begin{tikzpicture}[xscale=0.3,yscale=0.3] 
\draw[thick, red]  (-6, 2) -- ( 6, 2) node[right] {$R$};
\draw[thick, red]  (-6,-2) -- ( 6,-2) node[right] {$\oR$};;
\draw[thick, blue] (-4, 4) -- (-4,-4) node[below] {$L$};
\draw[thick, blue] ( 4, 4) -- ( 4,-4) node[below] {$\oL$};
\draw[thick, black!50!green] (0,2) to [out=180, in=180] (0,-2) to [out=0, in=0] (0,2);
\draw[black!50!green] (1.9,0) node {$V$};
\draw[draw=black!50!green, fill=green!5] ( 0, 2) circle [radius=0.2] ;
\draw[draw=black!50!green, fill=green!5] ( 0,-2) circle [radius=0.2] ;
\draw[draw=blue, fill=blue!5!white]      (-4, 2) circle [radius=0.2];
\draw[draw=blue, fill=blue!5!white]      ( 4, 2) circle [radius=0.2];
\draw[draw=blue, fill=blue!5!white]      (-4,-2) circle [radius=0.2];
\draw[draw=blue, fill=blue!5!white]      ( 4,-2) circle [radius=0.2];
\end{tikzpicture}
\caption{Each line segment represents a complex double line
and the green loop represents a great double circle in~$\X$.
The line segments and/or loop meet at a disk if and only if their corresponding components in~$\Sing\tau(X)$ intersect.}
\label{fig:t}
\end{figure}

\begin{proof}
It follows from \Cref{lem:tE} and \Cref{prp:P8}\ref{prp:P8:b} that
\[
\X\cap\E=L\cup\oL\cup R\cup\oR
\quad\text{and}\quad
\Sing\X =L\cup\oL\cup R\cup\oR\cup V.
\]
By \Cref{lem:tE} the incidences are as in \Cref{fig:t}.
The hyperplane section~$\X\cap\E$ is scheme theoretically of degree 8 by B\'ezout's theorem,
and thus $L$, $\oL$ and $R$, $\oR$ are pairs of complex conjugate double lines.
The central projection~$\tau(V)$ is a double line by \Cref{lem:tE},
which implies that $V$ is a great double circle.

The surface~$\X$ has lattice type~$(2,2,8)$ by \Cref{prp:N8}.
By \Cref{lem:tp}\ref{lem:tp:b},
we may assume \Wlog that $\l_0$ and $\l_1$
are the classes of a great circle and small circle, \resp.
By comparing with \Cref{lem:tE}, we deduce that
\[
[L]=[\oL]=\l_0
\quad\text{and}\quad
[R]=[\oR]=\l_1.
\]
The planar sections of the quartic surface~$\tau(\X)\subset\P^3$ that contain
the double line~$\tau(V)$ define a pencil of conics.
The preimage~$H\subset\S^3$ of a plane containing~$\tau(V)$ is an Euclidean 2-sphere that contains
the great circle~$V$ and two small circles $C$ and $C'$ that are centrally projected 2:1 to a conic in $\tau(\X)$.
We have $[H\cap\X]=\alpha\,[V]+2\,\l_1=2\,\l_0+2\,\l_1$ for some $\alpha\in\Z_{>0}$ and thus
\[
[V]\in\{2\,\l_0,~\l_0\}.
\]
Since $\delta(\X)=8$,
it follows from \AXM{a1} at \Cref{def:delta} that
\[
\Delta(R)+\Delta(\oR)+\Delta(L)+\Delta(\oL)+\Delta(V)=8.
\]
We observe that $\Delta(L)=\Delta(\oL)\geq 1$ and $\Delta(R)=\Delta(\oR)\geq 1$
by Axioms~\ref{a2} and~\ref{a3}.
We apply \AXM{a5} with $\rho=\tau$
and find that $\Delta(\tau(V))\cdot 2\geq \Delta(V)\cdot 1$, where $\Delta(\tau(V))=1$ by \Cref{lem:tE}.
As $\Delta(V)\geq \deg V=2$ by \AXM{a2}, we deduce that
\[
\Delta(V)=2.
\]
Since $\tau(L),\tau(\oL)\nsubseteq\Sing\tau(\X)$ and $\Delta(\tau(R))=\Delta(\tau(\oR))=1$,
we have
\[
\Delta(L)=\Delta(\oL)=1
\quad\text{and}\quad
\Delta(R)=\Delta(\oR)=2.
\]
We concluded the proof.
\end{proof}

\begin{remark}
\label{rmk:LR}
Notice that in \Cref{prp:t}, either $L,\oL\subset \E$ are both left generators
and $R,\oR\subset \E$ are both right generators, or vice versa.
\END
\end{remark}

\begin{lemma}
\label{lem:star}
If $A\subset\X$ is a great circle and $B\subset\X$
is a small circle \st their intersection~$A(\R)\cap B(\R)$ contains the identity quaternion~$\mathbf{1}\in S^3$,
then
$\X(\R)$ is equal to either $A(\R)\star B(\R)$ or $B(\R)\star A(\R)$.
\end{lemma}

\begin{proof}
We know from \Cref{prp:t} that the great circles in~$\X$ meet the elliptic absolute~$\E$
at the double generators $R$ and $\oR$.
Recall from \Cref{rmk:LR} that these generators are either both left or both right.
Let $\varepsilon\in\S^3_\R$ \st $\bR(\varepsilon)=\mathbf{1}$.

First, we suppose that $R$ and $\oR$ are both right generators.

Let $\cZ\subset \S^3$ be defined as the Zariski closure of
$\set{\alpha\,\hstar\,\beta}{\alpha\in A\setminus\E,~\beta\in B\setminus\E }$
and let $F\subset \cZ\times B$ be the pencil of circles on the surface~$\cZ$
\st $F_\beta=\varphi_\beta(A)$ for all $\beta\in B\setminus\E$,
where $\varphi_\beta\in\rt\S^3$ sends $x$ to $x\,\hstar\,\beta$.
It follows from \Cref{prp:E}\ref{prp:E:a} that $F_\beta(\R)=\set{a\,\star\,\bR(\beta)}{a\in A(\R)}$
for all $\beta\in B_\R$ and thus $\cZ(\R)=A(\R)\star B(\R)$.
We know from \Cref{prp:E}\ref{prp:E:b} that $\rt\S^3\subset\aut_\E\S^3$ and thus
infinitely many members of~$F$ are great circles.

Since $\varepsilon\in B$ and $F_\varepsilon=A$,
we find that $A\subset\X\cap\cZ$, $A\cap R\neq\varnothing$ and $A\cap\oR\neq\varnothing$.
We now apply \Cref{prp:E}\ref{prp:E:d} and deduce
that $F_\beta\cap R\neq\varnothing$ and $F_\beta\cap\oR\neq \varnothing$ for all~$\beta\in B$.

Suppose by contradiction that $\cZ\neq\X$.
In this case there exists $\beta\in B_\R$ \st $F_\beta$ is a great circle that is not contained in~$\X$.
We observe that $\beta\in F_\beta$, since $\varepsilon\in A$ by assumption.
Let $C$ be a great circle in~$\X$ \st $\beta\in C$.
The incidence relations for the current scenario are schematically depicted in \Cref{fig:converse}.
\begin{figure}[!ht]
\centering
\begin{tikzpicture}[yscale=0.8]
%
\draw[thick, red] (-2,1) -- (2,1) node[right] {$R$};
\draw[thick, red] (-2,-1) -- (2,-1) node[right] {$\oR$};
\draw[thick, blue] (-1,2) to [out=260, in=100] (-1,-2) node[below] {$A$};
\draw[thick, blue] (1,2) to [out=260, in=100] (1,-2) node[below] {$F_\beta$};
\draw[thick, densely dotted, blue] (0,2) to [out=290, in=125] (2,-2) node[below right] {$C$};
\draw[thick, black!20!green] (-3,-0.2) to [out=10, in=170] (3,-0.2) node[right] {$B$};
\draw[draw=black, fill=white] (0.8,0.05) circle  [radius=0.08] node[black, above right] {$\beta$};
\draw[draw=black, fill=white] (-1.2,0.05) circle [radius=0.08] node[black, above left] {$\varepsilon$};
\draw[draw=black, fill=white] (-1.15,1) circle   [radius=0.08];
\draw[draw=black, fill=white] (-1.15,-1) circle  [radius=0.08];
\draw[draw=black, fill=white] (1-0.15,1) circle  [radius=0.08];
\draw[draw=black, fill=white] (1-0.15,-1) circle [radius=0.08];
\draw[draw=black, fill=white] (0.35,1) circle    [radius=0.08];
\draw[draw=black, fill=white] (1.35,-1) circle   [radius=0.08];
\end{tikzpicture}
\caption{See the proof of \Cref{prp:star}. Each curve segments correspond to
one of the complex curves~$A,B,C,R,\oR\subset \X$ or $F_\beta\subset\cZ$, where $\beta\in B_\R$.
Two curve segments meet at a disk if and only if the corresponding complex curves intersect.
The incidence point~$\varepsilon\in\S^3_\R$ corresponds via $\bR$ to the identity quaternion $\mathbf{1}\in S^3$.
}
\label{fig:converse}
\end{figure}

The central projections $\tau(F_\beta)$ and $\tau(C)$
are lines that meet the right generators $\tau(R)$ and $\tau(\oR)$ in
the complex doubly ruled quadric~$\tau(\E)$ \st $\tau(F_\beta)\cap \tau(C)=\{\tau(\beta)\}$.
We arrived at a contradiction as $\tau(F_\beta)$ and $\tau(C)$ span a plane
so that $\tau(R)$ and $\tau(\oR)$ cannot be skew.
Hence, $\cZ=\X$ so that $\X(\R)=A(\R)\star B(\R)$.

Finally, we suppose that $R$ and $\oR$ are both left generators.
In this case, we define $F\subset \cZ\times B$ to be the pencil of circles \st
$F_\beta=\varphi_\beta(A)$ for all $\beta\in B\setminus\E$,
where $\varphi_\beta\in\lt\S^3$ sends $x$ to $\beta\,\hstar\,x$.
The analoguous proof as before shows that $\X(\R)=B(\R)\star A(\R)$.
\end{proof}

\begin{proposition}
\label{prp:star}
There exists a great circle $A\subset\S^3$ and small circle $B\subset\S^3$
\st $\X(\R)$ is equal to either $A(\R)\star B(\R)$ or $B(\R)\star A(\R)$.
\end{proposition}

\begin{proof}
Let $\varepsilon\in\S^3_\R$ \st $\bR(\varepsilon)=\mathbf{1}$
is the identity quaternion~$\mathbf{1}\in S^3$.
Notice that the left and right Clifford translations send great circles in $\S^3$ to great circles
as they preserve the point~$(1:0:0:0:0)$.

By \Cref{prp:E} there exists
a right Clifford translation~$\varphi\in \rt\S^3\cap\aut\S^3$ \st $\varepsilon\in \varphi(\X)$.
Similarly, there exists a left Clifford translation~$\varphi\in \lt\S^3\cap\aut\S^3$
\st $\varepsilon\in \varphi(\X)$.
We now apply \Cref{lem:star} and find that
there exists a great circle $A\subset\S^3$, a small circle $C\subset\S^3$
and $q\in \S^3_\R$
\st either
\begin{Mlist}
\item $\varphi(\X)(\R)=A(\R)\star C(\R)$ for some $\varphi\in \rt\S^3$ that sends $x$ to $x\,\hstar\,q$, or
\item $\varphi(\X)(\R)=C(\R)\star A(\R)$ for some $\varphi\in \lt\S^3$ that sends $x$ to $q\,\hstar\,x$.
\end{Mlist}
In the first case, we have $\X(\R)=A(\R)\star B(\R)$, where the small circle $B$
is defined as the Zariski closure of $\set{c\,\hstar\,q}{c\in C\setminus\E}$.
In the second case, we have
$\X(\R)=B(\R)\star A(\R)$, where the small circle $B$
is defined as the Zariski closure of $\set{q\,\hstar\,c}{c\in C\setminus\E}$.
This concludes the proof.
\end{proof}

\section{Shapes}
\label{sec:shape}

In this section, we investigate the shapes of great celestial surfaces of degree eight
and prove \Cref{thm:shape} and its corollaries.
We use the fact that such a surface is the image of a continuous map $S^1\times S^1\to S^3$
whose parameter lines are circles.

Let $\pi_1,\pi_2\c \P^1\times\P^1\to\P^1$ denote the projections
to the first and second component of $\P^1\times \P^1$, \resp.
A fiber of $\pi_1$ and $\pi_2$ is called a \df{left fiber} and \df{right fiber}, \resp.
The image of a left fiber or right fiber
with respect a given morphism $\P^1\times\P^1\to\S^3$
is called a \df{left image} and \df{right image}, \resp.
A left/right fiber/image is real unless explicitly stated otherwise.

Suppose that $X\subset\S^3$ is a surface and let $V$ be the Zariski closure of~$\Sing X_\R$.
We call $\varphi\c \P^1\times\P^1\to X$ a \df{great morphism}
if it is a birational morphism \st its left and right images are circles
and $V$ is a circle whose preimage~$\varphi^{-1}(V)$
consists of either
\begin{Mlist}
\item two left fibers,
\item a single left fiber, or
\item two complex conjugate left fibers.
\end{Mlist}
We shall refer to $V\subset\Sing X$ as the \df{special left image} of~$\varphi$.

\begin{lemma}
\label{lem:gm}
If $X\subset \S^3$ is a great celestial surface of degree eight,
then there exists a great morphism~$\varphi\c \P^1\times\P^1\to X$
\st the left images and right images correspond to the great and small
circles, \resp.
\end{lemma}

\begin{proof}
By \Cref{prp:smooth8} there exists a
smooth model~$\varphi\c \P^1\times \P^1\to X$ whose components are forms on $\P^1\times\P^1$
of bidegree~$(2,2)$.
It follows that the complex left images and complex right images
are complex conics and/or complex double lines in~$X$ for all complex~$p\in\P^1$.
We know from \Cref{prp:P8}\ref{prp:P8:b} that the complex double lines are non-real.
This implies that the left images and right images are circles.
The Zariski closure~$V$ of $\Sing X_\R$ is by \Cref{prp:t}
is a great circle \st $[V]\in\{\l_0,2\,\l_0\}$.
If $[V]=2\,\l_0$, then
it follows from \Cref{lem:gg}\ref{lem:gg:b} that $\varphi^{-1}(V)$
consists of two complex left fibers. These left fibers are either both real or
complex conjugate.
If $[V]=\l_0$, then $\varphi^{-1}(V)$ consists of a single left fiber.
We established that $\varphi$ is a great morphism with special left image~$V$.
By \Cref{prp:t}, we may assume \Wlog that the left and right images are great and small circles, \resp.
\end{proof}

Suppose that $\varphi\c \P^1\times\P^1\to X$ is a great morphism
with special left image~$V$.
Let $Z:=X(\R)$, $C:=V(\R)$, $L_p:=\{p\}\times S^1$ and $R_p:=S^1\times\{p\}$ for all $p\in S^1$.
Since $\P^1_\R\times\P^1_\R\cong S^1\times S^1$ and $\S^3_\R\cong S^3$,
the great morphism~$\varphi$ induces a birational morphism $\xi\c S^1\times S^1\to Z$.
Moreover, there exist $p,q\in S^1$ \st either
\begin{Mlist}
\item $\xi^{-1}(C)=L_p\cup L_q$,
\item $\xi^{-1}(C)=L_p$, or
\item $\xi(S^1\times S^1)=Z\setminus C$.
\end{Mlist}
We call the morphism~$\xi$ induced by $\varphi$ an \df{induced great morphism}.
We call $C$ the \df{double left circle}.
For all $p\in S^1$, we refer to $L_p$, $R_p$, $\xi(L_p)$ and $\xi(R_p)$
as a \df{left fiber}, \df{right fiber}, \df{left circle} and \df{right circle}, \resp.

\begin{lemma}
\label{lem:c123}
If $\xi\c S^1\times S^1\to Z$
is an induced great morphism with double left circle~$C$,
then there exist tori $T, T'\subset S^3$
\st for all left circles~$A\subset Z$ and right circles~$B\subset Z$
one of the following three cases holds:
\begin{Menum}{(\roman*)}{(\roman*)}
\item\label{c1}
$\xi^{-1}(C)$ consists of two left fibers,
$Z=T\cup T'$, $C=T\cap T'$ \st $C$ is non-trivial in both $T$ and $T'$,
$|B\cap C|=2$ and either $A\subset T$ or $A\subset T'$.
\item\label{c2}
$\xi^{-1}(C)$ consists of a single left fiber,
$Z=T$ and $|B\cap C|=1$.
\item\label{c3}
$\xi(S^1\times S^1)=Z\setminus C$,
$Z=T\cup C$, $T\cap C=\varnothing$ and $|B\cap C|=0$.
\end{Menum}
\end{lemma}

\begin{proof}
By definition, there exists $p,q\in S^1$ \st
either $\xi^{-1}(C)\in\{L_p\cup L_q,\,L_p\}$ or $\xi(S^1\times S^1)=Z\setminus C$.
We illustrated in \Cref{fig:xi} the left fibers that map to $C$.
\begin{figure}[!ht]
\centering
\csep{4mm}
\begin{tabular}{ccc}
\begin{tikzpicture}[xscale=0.8,yscale=1]
\draw[blue] (-2,1) to (2,1);  \draw[blue,->] (-2,1) to (0,1);
\draw[blue] (-2,-1) to (2,-1);\draw[blue,->] (-2,-1) to (0,-1);
\draw[red] (-2,1) to (-2,-1);\draw[red,->] (-2,1) to (-2,0);
\draw[red] ( 2,1) to ( 2,-1);\draw[red,->] ( 2,1) to ( 2,0);
\draw[black!45!green,dashed,very thick] (-2,1) to node[right] {$L_p$} (-2,-1);
\draw[black!45!green,dashed,very thick] (2,1) to node[left] {} (2,-1);
\draw[black!45!green,dashed,very thick] (-0.3,1) to node[right] {$L_q$} (-0.3,-1);
\node at (-1,0.3) {$U$};
\node at (1,0.3) {$U'$};
\end{tikzpicture}
&
\begin{tikzpicture}[xscale=0.8,yscale=1]
\draw[blue] (-2,1) to (2,1);  \draw[blue,->] (-2,1) to (0,1);
\draw[blue] (-2,-1) to (2,-1);\draw[blue,->] (-2,-1) to (0,-1);
\draw[red] (-2,1) to (-2,-1);\draw[red,->] (-2,1) to (-2,0);
\draw[red] ( 2,1) to ( 2,-1);\draw[red,->] ( 2,1) to ( 2,0);
\draw[black!45!green,dashed,very thick] (-2,1) to node[right] {$L_p$} (-2,-1);
\draw[black!45!green,dashed,very thick] (2,1) to node[left] {} (2,-1);
\end{tikzpicture}
&
\begin{tikzpicture}[xscale=0.8,yscale=1]
\draw[blue] (-2,1) to (2,1);  \draw[blue,->] (-2,1) to (0,1);
\draw[blue] (-2,-1) to (2,-1);\draw[blue,->] (-2,-1) to (0,-1);
\draw[red] (-2,1) to (-2,-1);\draw[red,->] (-2,1) to (-2,0);
\draw[red] ( 2,1) to ( 2,-1);\draw[red,->] ( 2,1) to ( 2,0);
\end{tikzpicture}
\\
\ref{c1}&\ref{c2}&\ref{c3}
\end{tabular}
\caption{A torus $S^1\times S^1$ is identified with a square,
where the blue horizontal sides are identified
and red vertical sides are identified.
Any vertical or horizontal line segment in the square represents a left and right fiber, \resp.
The left fibers that are send via $\xi$ to
the double left circle~$C$ are represented by the vertical green dashed line segments.}
\label{fig:xi}
\end{figure}

First, we suppose that $\xi^{-1}(C)=L_p\cup L_q$.
In this case, $\xi$ induces a biregular isomorphism
$S^1\times S^1\setminus(L_p\cup L_q)\cong Z\setminus C$.
Hence, there exist~$U,U'\subset S^1\times S^1$ \st $S^1\times S^1=U\cup U'$ and $U\cap U'=L_p\cup L_q$
(see \Cref{fig:xi}).
Since $\xi(L_p)=\xi(L_q)=C$, we find that $\xi(U)\cong \xi(U')\cong S^1\times S^1$
so that $Z$ is the union of two tori that intersect at the double left circle~$C$.
Moreover, any left circle is contained in either the torus~$\xi(U)$ or the torus~$\xi(U')$.
A right fiber meets $L_p\cup L_q$ in two points and thus a right circle meets
the double left circle $C$ in two points a well.
We observe that $C$ is non-trivial in both~$\xi(U)$ and~$\xi(U')$.
We established that \ref{c1} holds.

Next, we suppose that $\xi^{-1}(C)=L_p$. In this case, $\xi$ induces a biregular isomorphism $S^1\times S^1\cong Z$
and the right fibers meet the left fiber~$L_p$ in a single point. It follows that \ref{c2} holds.

For the remaining case $\xi(S^1\times S^1)=Z\setminus C$, we observe that
$\xi$ induces a biregular isomorphism~$S^1\times S^1\cong Z\setminus C$ so that \ref{c3} holds.

We conclude the proof as each of the three cases is accounted for.
\end{proof}

\begin{example}
\label{exm:shapeI}
Suppose that $\xi\c S^1\times S^1\to Z$ is an induced great morphism with double left circle~$C$
\st the preimage~$\xi^{-1}(C)$ consist of two left fibers.
By \Cref{lem:gm}, the surface $Z\subset S^3$ is great celestial and of degree eight.
Moreover, the left and right circles of $\xi$ are great and small circles, \resp.
By \Cref{lem:c123} and the definition of~$\xi$, there exist tori~$T,T'\subset S^3$ \st
$Z=T\cup T'$, $C=T\cap T'$ and $C$ is non-trivial in both $T$ and~$T'$.
Moreover, each tori $T''\in\{T,T'\}$ is a disjoint union of great circles, and
each small circle in $Z$ meets a great circle~$A\subset T''\setminus C$ transversally in one point.
The small circles meet the great double circle~$C$ in two points.
See \Cref{fig:gs} for an illustration of such a surface.
\END
\end{example}

\begin{figure}[!ht]
\centering
\csep{5mm}
\begin{tabular}{cc}
{\it great circles} & {\it small circles}
\\
\fig{4}{5}{proof-shape-I-great} &
\fig{4}{5}{proof-shape-I-little}
\end{tabular}
\caption{The surface $Z$ is the union of two tori that
intersect at the double great circle~$C$.
Each of these two tori are a disjoint union of great circles
and the small circles meet $C$ in two real points.}
\label{fig:gs}
\end{figure}

\begin{lemma}
\label{lem:S}
Suppose that $Z\subset S^3$ is a great celestial surface of degree eight
and $\xi\c S^1\times S^1\to Z$ an induced great morphism with double left circle~$C$.
Then there exists $n\in \{0,1,2\}$ \st for all Euclidean 2-spheres $S\subset S^3$
containing~$C$,
\[
Z\cap S=C\cup B\cup B',
\]
where $B, B'$ are disjoint right circles \st $|B\cap C|=|B'\cap C|=n$.
\end{lemma}

\begin{proof}
Direct consequence of \Cref{prp:t,lem:c123}.
\end{proof}

\begin{example}
\label{exm:S}
Let $Z,S\subset S^3$ and $C,B, B'\subset Z$
be defined as in \Cref{lem:S}.
Suppose that $p_\infty\in C$ is the center of stereographic projection~$\mu\c S^3\dto \R^3$.
Notice that $\mu$ defines biregular isomorphism $S^3\setminus\{p_\infty\}\cong \R^3$
that sends circles containing~$p_\infty$ to lines, and Euclidean 2-spheres containing $p_\infty$
to planes.
Hence, $\mu(S)$ is a plane containing
the double line $\mu(C)$ and the circles $\mu(B)$ and $\mu(B')$.
In \Cref{fig:S}, we show three examples for $\mu(Z)$
together with the plane section $\mu(Z\cap S)$.
\begin{Mlist}
\item If $|B\cap C|=2$, then $\mu(B)$ meets $\mu(C)$ transversally in two points.
\item If $|B\cap C|=1$, then $\mu(B)$ meets $\mu(C)$ tangentially.
\item If $|B\cap C|=0$, then the torus $\mu(Z)$ and double line $\mu(C)$ are \df{linked} in~$\R^3$,
namely the circle $C$ is non-trivial in the complement $S^3\setminus Z$.
\end{Mlist}
\begin{figure}[!ht]
\centering
\csep{4mm}
\begin{tabular}{ccc}
\fig{3.5}{3.5}{great-little-deg6-real-0-90-90-1-2}      &
\fig{3.5}{3.5}{great-little-deg6-tangent-0-90-90-1-1}   &
\fig{3.5}{3.5}{great-little-deg6-complex-0-90-90-1-1q2}
\\
$|B\cap C|=2$ & $|B\cap C|=1$ & $|B\cap C|=0$
\end{tabular}
\caption{Stereographic projection of a Euclidean 2-sphere~$S\subset S^3$ containing
the great double circle $C$ so that $Z\cap S=C\cup B\cup B'$ for some right circles $B$ and $B'$.}
\label{fig:S}
\end{figure}

\Cref{lem:S} and \Cref{fig:S} suggests that
great celestial surfaces of degree eight in~$S^3$ could be considered as counterparts in elliptic geometry
of surfaces in~$\R^3$ that are obtained by revolving
a circle about an axis that is coplanar with the circle, namely the
spindle torus, horn torus and ring torus.
\END
\end{example}

\begin{lemma}
\label{lem:ind}
Suppose that $Z\subset S^3$ is a great celestial surface of degree eight and let
$\xi\c S^1\times S^1\to Z$ be an induced great morphism with double left circle~$C$.
\begin{claims}
\item\label{lem:ind:a}
The surface $Z$ satisfies the Conditions
\ref{C1}, \ref{C2}, \ref{C3} and \ref{C4} at \Cref{def:shape}.

\item\label{lem:ind:b}
If $\xi^{-1}(C)$ consists of a single left fiber, then the small circles in~$Z$
meet the great circle~$C$ tangentially in one point.

\end{claims}
\end{lemma}

\begin{proof}
We know from \Cref{prp:P8}\ref{prp:P8:a} that $Z$ is 2-circled.
We recall from \Cref{lem:gm}
that the left circles are great and the right circles are small.
It follows from the definition of induced great morphisms
that $Z$ is a disjoint union of left circles and a union of right circles.
The singular locus of $Z$ is the double great circle~$C$.
We established that Assertion~\ref{lem:ind:a} holds.
Assertion~\ref{lem:ind:b} is a straightforward consequences of \Cref{lem:S}
(see also \Cref{exm:S}).
\end{proof}

\begin{proof}[Proof of \Cref{thm:shape}]
\ASN{thm:shape:a}
follows from
\citep[Theorem~1]{2021circle} and
\ASN{thm:shape:b}
follows from
\citep[Theorem~1(c) and Corollary~4]{2024lt}.
It remains to show \ASN{thm:shape:c}.
It follows from \Cref{lem:gm}
that there exists an induced great morphism~$\xi\c S^1\times S^1\to Z$.
We know from \Cref{lem:ind}\ref{lem:ind:a} that $Z$ satisfies the Conditions
\ref{C1}, \ref{C2}, \ref{C3} and \ref{C4}.
By \Cref{lem:c123} either \ref{c1}, \ref{c2} or \ref{c3} holds.
We deduce from \Cref{lem:gm} and \Cref{lem:ind}\ref{lem:ind:b} that these
correspond to Shape~\ref{I}, Shape~\ref{II} and Shape~\ref{III}, \resp
~(see also \Cref{exm:shapeI}).
\end{proof}

\begin{proof}[Proof of \Cref{cor:top}]
If $\deg Z=2$, then $Z$ is a sphere.

If $Z$ is either a EO cyclide or CO cyclide, then it stereographically
projects to a quadratic cone and thus there exists spheres $S$ and $S'$
\st $Z=S\cup S'$ and $|S\cap S'|=2$.

Next, suppose that $Z$ is either a Blum cyclide, Perseus cyclide or ring cyclide.
Let $X\subset\S^3$ be the great celestial surface \st $X(\R)=Z$.
By \citep[Theorem~3]{2021circle}, there exists
a smooth model~$\varphi\c Y\to X$ \st $Y$ is $\P^1\times\P^1$ blown up
in non-real points.
This implies that $Y_\R\cong\P^1_\R\times\P^1_\R\cong S^1\times S^1$.
Since $X(\R)$ is smooth, the map~$\varphi$ restricted to the real points
defines a biregular isomorphism~$Y_\R\to X(\R)$.
Hence, $Z$ is in this case a torus.

Finally, if $Z$ has Shape~\ref{I}, Shape~\ref{II} or Shape~\ref{III},
then $Z$ is as a direct consequence of the definitions as asserted
in \Cref{cor:top}. In particular, recall that by \citep[Theorem~C13]{2003},
any two non-trivial cycles in a torus~$T$ are related by some homeomorphism~$T\to T$.
By \Cref{thm:shape}, we considered all possible cases and thus concluded the proof.
\end{proof}

\begin{proof}[Proof of \Cref{cor:prod}]
Direct consequence of \Cref{thm:shape}. Notice that
an EO cyclide and CO~cyclide each contain infinitely many concurrent circles.
\end{proof}

\begin{proof}[Proof of \Cref{cor:AB}]
Direct consequence of \Cref{thm:shape}.
Notice that $\{a\}\star B$ and $A\star\{b\}$ are circles for all $a\in A$, $b\in B$ and thus
$A\star B$ is a great celestial surface.
\end{proof}

The following corollary proposes
an alternative for the hypothesis that~$X\subset\S^3$
is a great celestial surface.

\begin{corollary}
\label{cor:map}
If there exists a birational morphism $\varphi\c \P^1\times\P^1\to X\subset \S^3$ \st
\begin{Mlist}
\item
$\varphi(\{p\}\times\P^1)$ and $\varphi(\P^1\times\{p\})$ are circles for all points $p\in\P^1_\R$, and
\item
$\varphi(\{q\}\times\P^1)\cap X_\R=\Sing X_\R$ is a double circle for some complex point~$q\in\P^1$,
\end{Mlist}
then there exist tori~$T,T'\subset S^3$ and circle $C\subset S^3$ \st
$C=\Sing X(\R)$ and either
\begin{Mlist}
\item $X(\R)=T\cup T'$ with $C=T\cap T$ a non-trivial cycle in both $T$ and $T'$,
\item $X(\R)=T$ with $C\subset T$, or
\item $X(\R)=T\cup C$ and $T\cap C=\varnothing$.
\end{Mlist}

\end{corollary}

\begin{proof}
Since $\Sing X_\R$ is a double circle, we deduce that $\varphi$ is a great morphism.
Hence, the proof is concluded by \Cref{lem:c123}.
\end{proof}

\begin{conjecture}
If $\varphi\c \P^1\times\P^1\to X\subset \S^3$ is a great morphism,
then $X$ is M\"obius equivalent to a great surface.
\end{conjecture}

\section{Acknowledgements}

I thank Mikhail~Skopenkov for the many interesting discussions, insights and corrections.
Thanks also go to Helmut~Pottmann for the insights and inspiration
concerning the classical geometric aspects of celestial surfaces and their applications in architecture.
Finally, many thanks to the editor and anonymous referees for detailed comments.
The surface figures were generated using \citep[Sage]{2012sage}, \citep[Surfex]{2008surfex} and Mathematica.
This research was supported
by the Austrian Science Fund (FWF) projects P33003 and P36689.

\bibliography{great-circle}

\ifarxiv
\textbf{address:}
Institute for Algebra, Johannes Kepler University, Linz, Austria
\\
\textbf{email:} info@nielslubbes.com
\fi

\end{document}